\newcommand{\R}{{\mathbb R}}
\newcommand{\N}{{\mathbb N}}
\newcommand{\no}{\nonumber}
\newcommand{\be}{\begin{eqnarray}}
\newcommand{\ben}{\begin{eqnarray*}}
\newcommand{\en}{\end{eqnarray}}
\newcommand{\enn}{\end{eqnarray*}}
\newcommand{\ov}{\overline}
\newcommand{\G}{\Gamma}
\newcommand{\Om}{\Omega}
\newcommand{\om}{\omega}
\newtheorem{theorem}{Theorem}[section]
\newtheorem{remark}[theorem]{Remark}
\definecolor{ljl}{rgb}{0,0,0}
\begin{document}
\renewcommand{\theequation}{\arabic{section}.\arabic{equation}}
\title{\bf The Nystr\"{o}m method for elastic wave scattering by unbounded rough surfaces 
}

\author{Jianliang Li\thanks{School of Mathematics and Statistics, Changsha University of Science
and Technology, Changsha, 410114, China (\tt lijl@amss.ac.cn)}
\and
Xiaoli Liu\thanks{Corresponding author. INRIA and Ecole Polytechnique (CMAP), 
Institut Polytechnique de Paris, Route de Saclay, 91128, Palaiseau, France
({\tt xiaoli.liu@inria.fr})}
\and
Bo Zhang\thanks{LSEC, NCMIS and Academy of Mathematics and Systems Science, Chinese Academy of Sciences, 
Beijing 100190, China, and School of Mathematical Sciences, University of Chinese Academy of Sciences, 
Beijing 100049, China ({\tt b.zhang@amt.ac.cn})}
\and
Haiwen Zhang\thanks{NCMIS and Academy of Mathematics and Systems Science, Chinese Academy of Sciences,
Beijing 100190, China, and School of Mathematical Sciences, University of Chinese Academy of Sciences, 
Beijing 100049, China ({\tt zhanghaiwen@amss.ac.cn})}}
\date{}
\maketitle

\begin{abstract}
We consider the numerical algorithm for the two-dimensional time-harmonic elastic wave scattering by unbounded rough surfaces with Dirichlet boundary condition. A Nystr\"{o}m method is proposed for the scattering problem 
based on the integral equation method.
Convergence of the Nystr\"{o}m method is established with convergence rate depending on the smoothness of 
the rough surfaces. In doing so, a crucial role is played by analyzing the singularities of the kernels 
of the relevant boundary integral operators. Numerical experiments are presented to demonstrate the 
effectiveness of the method.

\vspace{.2in}
{\bf Keywords:} elastic wave scattering, unbounded rough surface, Nystr\"{o}m method.
\end{abstract}

\section{Introduction}\label{se1}
\setcounter{equation}{0}

We consider the two-dimensional time-harmonic elastic scattering problem for unbounded rough surfaces 
with Dirichlet boundary condition. This kind of problem has attracted a lot of attentions over the 
last decade since it has a wide range of applications in diverse scientific areas such as seismology 
and nondestructive testing.


The well-posedness of the direct scattering problems by rough surfaces has been extensively studied 
in the past thirty years. For acoustic case, the authors in \cite{SC99,SCZ99,SZ982,R96} first employed 
integral equation methods to prove that the scattering problem by an infinite rough surface with 
Dirichlet boundary condition is uniquely solvable.
These works have been extended to other boundary conditions, see \cite{ZS03} for the impedance case 
and \cite{dtS03} for  the penetrable case. By using variational formulation,
the unique solvability of the sound-soft rough surface scattering problem has been established 
in \cite{SE10,SM05}. For the case of elastic scattering by Dirichlet rough surfaces,
the uniqueness result was proved in \cite{TA01}, while the existence result was given in \cite{TA02} 
using the boundary integral equation method (see also \cite{TA} for a comprehensive discussion). 
The authors in \cite{JG12} studied the well-posedness of the elastic scattering problem by unbounded 
rough surfaces via the variational approach.

The computational aspect of the scattering problem by bounded obstacles has been extensively studied 
(see, e.g., \cite{I98,Monk2003} for  the finite elements method based on variational formulation 
and \cite{CK19,Tc07} for the Nystr\"{o}m method based on boundary integral equations).
For the acoustic scattering by unbounded rough surfaces,
many numerical algorithms have been already developed.
The numerical approach using finite elements method, combined with the perfectly matched layer technique, 
was presented in \cite{SM09}.
By dealing with a class of integral equations on the real line, the Nystr\"{o}m method has been applied 
to the acoustic scattering problem for the sound-soft rough surfaces \cite{MACK,LB13} and for the 
penetrable rough surfaces \cite{LYZ13}. However, few works are available for the numerical solution 
of elastic scattering by unbounded rough surfaces.

In this paper, we propose the Nystr\"{o}m method for two-dimensional time-harmonic elastic scattering 
problem for unbounded rough surfaces with Dirichlet boundary condition.
Our method is based on the integral equation formulations given in \cite{TA,TA02}, which can be reduced 
to a class of integral equations on the real line. A crucial role of our method is played by a thorough 
analysis on the singularities of the  kernels in the relevant integral equations, which involves the 
Green tensor for Navier equation in the half-space. By spliting off the logarithmic singularity in 
the related kernels and using the asymptotic behavior of the Bessel functions, we obtain the 
convergence of the Nystr\"{o}m method with convergence rate depending on the smoothness of the 
rough surfaces. Several numerical examples are presented to verify our theoretical results and 
show the effectiveness of our method.

The paper is organized as follows. In Section \ref{sec2}, we give a brief introduction on the 
mathematical model of the scattering problem and present the existed well-posedness result using 
the integral equation method. Section \ref{sec3}
is devoted to analyzing the singularities for the relevant kernels included in the integral impression of the solution. 
In Section \ref{sec4}, we establish the convergence of the Nystr\"{o}m method. Numerical experiments 
are given to show the effectiveness of the proposed method in Section \ref{sec5}. 
Finally, we give a conclusion in Section \ref{sec6}.

\section{The well-posedness of the scattering problem}\label{sec2}\setcounter{equation}{0}

In this section, we present the existed results for the well-posedness of the two-dimensional elastic 
wave scattering problem by unbounded rough surfaces.
Fisrt, we introduce some basic notations and function spaces used in this paper. For $V\subset\R^m, m=1,2$, 
let $BC(V)$ represent the set of bounded and continuous complex-valued functions on $V$ under the norm $\|\varphi\|_{\infty, V}:=\sup_{x\in V}|\varphi(x)|$. We denote by $BC^n(\R^m)$ the set of all functions 
whose derivatives up to order $n$ are bounded and continuous on $\R^m$ with the norm
\begin{eqnarray*}
\|\varphi\|_{BC^n(\R^m)}:=\max_{l=0,1,...,n}\max_{|\alpha|=l}\|\partial_{x_1}^{\alpha_1}
\partial_{x_2}^{\alpha_2}...\partial_{x_m}^{\alpha_m}\varphi\|_{\infty,\R^m},
\end{eqnarray*}
where $\alpha=(\alpha_1,\alpha_2,....,\alpha_m)$ and $|\alpha|:=\sum_{i=1}^m\alpha_i$. We define 
$C_{0,\pi}^n(\R^2)$ and $BC_p^n(\R^2)$ as
\begin{eqnarray*}\label{b2}
&&C_{0,\pi}^n(\R^2):=\{a(s,t)\in BC^n(\R^2): a(s,t)=0\;\;{\rm for }\; |s-t|\geq \pi\},\\ \label{b3}
&&BC_p^n(\R^2):=\{a(s,t)\in BC^n(\R^2): \|a\|_{BC_p^n(\R^2)}<\infty\}
\end{eqnarray*}
with the norm
\begin{eqnarray*}
\|a\|_{BC_p^n(\R^2)}:=\sup_{j,k=0,...,n, j+k\leq n}\left\|\widetilde{w}_p(s,t)
\frac{\partial^{j+k}a(s,t)}{\partial^j s\partial^k t}\right\|_{\infty},
\end{eqnarray*}
where the weight $\widetilde{w}_p(s,t):=(1+|s-t|)^p$ for some $p>1$, which are closed subspaces 
of $BC^n(\R^2)$. Let $H^1(V)$ and $H^{1/2}(\partial V)$ be the standard Sobolev spaces for any 
open set $V\subset\R^m$ provided the boundary of $V$ is smooth enough. The notations $H^1_{\rm loc}(V)$ 
and $H^{1/2}_{\rm loc}(V)$ stand for the set of functions which are elements of $H^1(\mathcal{V})$ 
and $H^{1/2}(\mathcal{V})$ for any $\mathcal{V}\subset\subset V$, respectively. Here the notation $\mathcal{V}\subset\subset V$ denotes that the closure of $\mathcal{V}$ is a compact subset of $V$.

Throughout this paper, let $h$ be a real number with $h<\inf_{x_1\in\R}f(x_1)$, where $f$ is the 
function of the rough surface which will be introduced later. We define the half-plane $U_h$ and 
its boundary $\Gamma_h$ as
\begin{eqnarray*}
U_h:=\{x\in\R^2: x_2>h\}\quad{\rm and}\quad \Gamma_h:=\{x\in\R^2: x_2=h\}.
\end{eqnarray*}
For $y=(y_1,y_2)\in U_h$, $y'$ is defined as
\begin{eqnarray*}
y':=(y_1,2h-y_2).
\end{eqnarray*}
The notations $J_n$ and $Y_n$ are Bessel functions and Neumann functions of order $n$, respectively. 
The linear combination
\begin{eqnarray*}
H_n^{(1)}:=J_n+{\rm i}Y_n
\end{eqnarray*}
is known as the Hankel function of the first kind of order $n$.

\begin{figure}[htbp]
\centering
\includegraphics[width=0.65\linewidth]{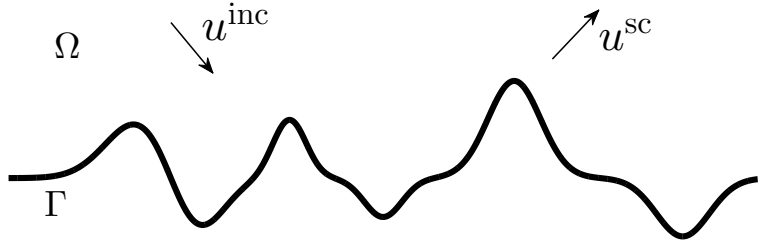}
\caption{Physical configuration of the scattering problem by a rough surface}
\label{figure1}
\end{figure}

As shown in Figure \ref{figure1}, the rough surface $\Gamma$ is described as
\ben
\Gamma:=\left\{(x_1,x_2)\in\R^2: x_2=f(x_1)\right\},
\enn
where $f\in  B^{(n)}_{c,M}$ with
\ben
B^{(n)}_{c,M}:=\{f\in BC^{n+2}(\R):\|f\|_{BC^{n+2}(\R)}\leq M\;\;{\rm and }\;\;\inf_{x_1\in\R}f(x_1)\geq c\}
\enn
for some nonnegative integer $n$, some constants $c>h$ and $M>0$, which implies that the 
surface $\Gamma$ lies above the $x_1$-axis.
The whole space is separated by $\Gamma$ into two unbounded half-spaces and the upper one is denoted by
\ben
\Omega:=\{(x_1,x_2)\in\R^2: x_2>f(x_1)\}.
\enn
Suppose an incoming field $\bm{u}^{\rm inc}=(u_1^{\rm inc}, u_2^{\rm inc})$
is incident on the infinite surface $\Gamma$ from the upper region $\Omega$. Then the scattering of 
$\bm{u}^{\rm inc}$ by the infinite rough surface $\Gamma$ can be modeled by the two-dimensional 
Navier equation
\begin{equation}\label{a1}
\mu \Delta {\bm u}^{\rm sc}+(\lambda + \mu)\nabla\nabla\cdot{\bm u}^{\rm sc}
+\omega^2{\bm u}^{\rm sc}=0\quad{\rm in}\;\;\Omega,
\end{equation}
where $\bm{u}^{\rm sc} =(u_1^{\rm sc}, u_2^{\rm sc})$ is the scattered field, $\omega>0$ represents the angular frequency, and $\lambda$ and $\mu$ are the Lam\'{e} constants satisfying $\mu>0$ and $\lambda+2\mu>0$, which leads to that the second order partial differential operator $\Delta^*:=\mu\Delta+(\lambda + \mu)\nabla\nabla\cdot$ is strongly elliptic \cite{M}. Let $\bm{u}:=\bm{u}^{\rm inc}+\bm{u}^{\rm sc}$ denote the total field consisting of the incident field and the scattered field. Further, the Dirichlet boundary condition is imposed on $\Gamma$, that is,
\begin{equation}\label{a2}
\bm{u}^{\rm sc}=-\bm{u}^{\rm inc}\quad{\rm on}\;\;\Gamma.
\end{equation}

Since the Navier equation (\ref{a1}) is imposed in the unbounded region $\Omega$, an appropriate radiation condition is needed for the considered scattering problem. In this paper, the scattered wave is assumed to satisfy the following upwards propagating radiation condition (UPRC) \cite{TA02}:
\be\label{a3}
{\bm u}^{\rm sc}(x)=\int_{\Gamma_H}{\bm \Pi}_{D,H}^{(2)}(x,y)\bm{\phi}(y)ds(y),\quad x\in U_H,
\en
for some $\bm{\phi}\in \left[L^{\infty}(\Gamma_H)\right]^2$ with $H>\sup\limits_{x_1\in\R}f(x_1)$, 
and the kernel $\bm{\Pi}_{D,H}^{(2)}(x,y)=\left(\Pi_{D,H,jk}^{(2)}(x,y)\right)_{j,k=1,2}$ in (\ref{a3}) 
is a matrix function with the elements given by
\begin{equation}\label{a4}
\Pi_{D,H,jk}^{(2)}(x,y)=\left(\bm{P}^{(y)}\left(\bm{G}_{D,H,j\cdot}(x,y)\right)^\top\right)_k,
\end{equation}
where $\bm{P}$ is the generalized stress vector defined on a curve $\Lambda\in\R^2$ with $\nu$ being the unit normal on $\Lambda$, that is,
\begin{eqnarray}\label{a5}
\bm{P}\bm{\varphi}:=(\mu+\widetilde{\mu})\frac{\partial \bm{\varphi}}{\partial \nu}+\widetilde{\lambda}\nu{\rm div}\bm{\varphi}-\widetilde{\mu}\nu^{\perp}{\rm div}^{\perp}\bm{\varphi}
\end{eqnarray}
with $\widetilde{\lambda}$, $\widetilde{\mu}\in\mathbb{R}$ satisfying $\widetilde{\lambda}+\widetilde{\mu}=\lambda+\mu$, $\nu^{\perp}=(\nu_2,-\nu_1)^\top$ for $\nu=(\nu_1,\nu_2)^\top$, and ${\rm div}^{\perp}\bm{\varphi}:=\frac{\partial\varphi_1}{\partial x_2}-\frac{\partial\varphi_2}{\partial x_1}$ for $\bm{\varphi}=(\varphi_1,\varphi_2)^\top$.
In \eqref{a4}, $\nu$ is the unit normal on $\Gamma_H$ pointing to the half-plane $U_H$ and $\bm{G}_{D,H}$ 
is the Green's tensor of the Navier equation (\ref{a1}) in  $U_H$ with 
Dirichlet boundary condition on $\Gamma_H$, which is given by
\begin{equation}\label{a6}
\bm{G}_{D,H}(x,y):=\bm{G}(x,y)-\bm{G}(x,y')+\bm{U}(x,y),\quad x,y\in U_H\quad{\rm and}\quad x\neq y,
\end{equation}
where $\bm{G}$ is the Green's tensor for the Navier equation (\ref{a1}) in free space $\mathbb{R}^2$, 
which is defined by
\begin{eqnarray}\label{a7}
\bm{G}(x,y)=\frac{1}{\mu}\Phi(x,y,\kappa_{\rm s})\bm{I}
+\frac{1}{\omega^2}\nabla_x\nabla_x^\top\left(\Phi(x,y,\kappa_{\rm s})
-\Phi(x,y,\kappa_{\rm p})\right),\quad x,y\in\R^2\quad{\rm and}\quad x\neq y,
\end{eqnarray}
with $\kappa_{\rm s}$ and $\kappa_{\rm p}$  being the shear and compressional wavenumbers defined by
\ben
\kappa_{\rm s}:=c_{\rm s}\omega, \quad\kappa_{\rm p}:=c_{\rm p}\omega \quad\text{with}\quad c_{\rm s}=\mu^{-1/2}, \quad c_{\rm p}:=(\lambda+2\mu)^{-1/2},
\enn
and
\ben
\Phi(x,y,\kappa):=\frac{\rm i}{4}H_0^{(1)}(\kappa |x-y|)
\enn
being the fundamental solution for the two-dimensional Helmholtz equation, and $\bm{U}(x,y)$ in (\ref{a6}) is a matrix function defined by
\begin{eqnarray*}
\bm{U}(x,y)=-\frac{\rm i}{2\pi\omega^2}\int_{\mathbb R}\left[\bm{M}_{\rm p}(\tau, \gamma_{\rm p},\gamma_{\rm s}; x_2,y_2)+\bm{M}_{\rm s}(\tau, \gamma_{\rm p},\gamma_{\rm s}; x_2,y_2)\right]e^{-{\rm i}(x_1-y_1)\tau}d\tau
\end{eqnarray*}
with
\begin{eqnarray*}
	\gamma_{\rm p}:=\sqrt{\kappa_{\rm p}^2-\tau^2},\quad\gamma_{\rm s}:=\sqrt{\kappa_{\rm s}^2-\tau^2},
\end{eqnarray*}
and
\begin{eqnarray*}
&&\bm{M}_{\rm p}(\tau, \gamma_{\rm p},\gamma_{\rm s}; x_2,y_2):=\frac{e^{{\rm i}\gamma_{\rm p}(x_2+y_2-2h)-e^{{\rm i}(\gamma_{\rm p}(x_2-h)+\gamma_{\rm s}(y_2-h))}}}{\gamma_{\rm p}\gamma_{\rm s}+\tau^2}\left[
  \begin{array}{cc}
    -\tau^2\gamma_{\rm s} & \tau^3 \\
    \tau\gamma_{\rm p}\gamma_{\rm s} & -\tau^2\gamma_{\rm p} \\
  \end{array}
\right],\\
&&\bm{M}_{\rm s}(\tau, \gamma_{\rm p},\gamma_{\rm s}; x_2,y_2):=\frac{e^{{\rm i}\gamma_{\rm s}(x_2+y_2-2h)-e^{{\rm i}(\gamma_{\rm s}(x_2-h)+\gamma_{\rm p}(y_2-h))}}}{\gamma_{\rm p}\gamma_{\rm s}+\tau^2}\left[
  \begin{array}{cc}
    -\tau^2\gamma_{\rm s} & -\tau\gamma_{\rm p}\gamma_{\rm s} \\
    -\tau^3 & -\tau^2\gamma_{\rm p} \\
  \end{array}
\right].
\end{eqnarray*}
With the aid of \cite[Theorem 2.1]{TA02}, we have $\bm{U}(x,y)\in \left[C^{\infty}(U_H)\cap C^1(\overline{U_H})\right]^{2\times 2}$, which will be used in the analysis on the convergence of the Nystr\"{o}m method.
We refer to \cite{TA, TA01} for more properties of the UPRC, and its relation to the Rayleigh expansion radiation condition for diffraction grating and the Kupradze's radiation condition for the scattering by bounded obstacles.

To ensure the uniqueness of the scattering problem, we need the following vertical growth rate condition
\begin{eqnarray}\label{a8}
\sup_{x\in\Omega}|x_2|^{\beta}|\bm{u}^{\rm sc}(x)|<\infty\quad{\rm for\;some\;\beta\in\R}.
\end{eqnarray}

In summary, the scattering problem (\ref{a1})--(\ref{a3}) and (\ref{a8}) can be described by the following boundary value problem with $\bm{g}=-\bm{u}^{\rm inc}|_{\Gamma}$:

{\bf Dirichlet Problem} (DP): Given ${\bm g}\in \left[BC(\G)\cap H^{1/2}_{\rm loc}(\G)\right]^2$,
find $\bm{u}^{\rm sc}\in \left[C^2(\Om)\cap C(\ov{\Omega})\cap H^1_{\rm loc}(\Omega)\right]^2$ such that
\begin{enumerate}[(i)]
\item
$\bm{u}^{\rm sc}$  is a solution of the Navier equation (\ref{a1}) in $\Omega$,
\item
$\bm{u}^{\rm sc}$ satisfies the Dirichlet boundary condition $\bm {u}^{\rm sc} = \bm{g}$ on $\Gamma$,
\item
$\bm{u}^{\rm sc}$ satisfies the UPRC (\ref{a3}),
\item
$\bm{u}^{\rm sc}$ satisfies the vertical growth rate condition (\ref{a8}).
\end{enumerate}

The following uniqueness result has been proved in \cite[Theorem 4.6]{TA01} for the problem (DP).

\begin{theorem}(\cite[Theorem 4.6]{TA01})\label{thm1}
 The problem (DP) has at most one solution.
\end{theorem}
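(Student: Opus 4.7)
The plan is to prove uniqueness via a Rellich-type energy identity combined with a Helmholtz decomposition of the elastic displacement. By linearity it suffices to show that the only solution of (DP) with trivial Dirichlet data $\bm{g}\equiv 0$ is $\bm{u}^{\rm sc}\equiv 0$ in $\Om$. Fix any $H>\sup_{x_1\in\R}f(x_1)$ and write $\bm{u}:=\bm{u}^{\rm sc}$.

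First I would exploit the UPRC \eqref{a3} together with the vertical growth condition \eqref{a8}. The representation of $\bm{u}$ on $\ov{U_H}$ as an integral of $\bm{\Pi}_{D,H}^{(2)}$ against a density $\bm{\phi}\in [L^{\infty}(\G_H)]^2$ allows, via the decomposition \eqref{a6} and the Fourier-integral structure of $\bm{U}(x,y)$, a careful oscillatory-integral analysis. The expected outcome is horizontal decay of $\bm{u}$ and its first tangential derivatives along any horizontal line $\G_{H'}$ with $H'>H$, strong enough that lateral boundary contributions over $\{|x_1|=A\}$ become negligible as $A\to\infty$.

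Next I would apply Betti's first formula to the pair $(\bm{u},\ov{\bm{u}})$ on $\Om_A\cap U_H$ with $\Om_A:=\Om\cap\{|x_1|<A\}$. The Dirichlet condition $\bm{u}=0$ on $\G$ eliminates the contribution from the rough surface, the lateral contributions vanish in the limit $A\to\infty$ by the decay step above, and the top contribution over $\G_H$ becomes an expression involving the generalized stress $\bm{P}\bm{u}$ and $\bm{u}|_{\G_H}$. To extract information from this flux I would use the Helmholtz decomposition $\bm{u}=\nabla\varphi_{\rm p}+\curl\psi_{\rm s}$, which splits the Navier problem into two scalar Helmholtz problems with wavenumbers $\ka_{\rm p}$ and $\ka_{\rm s}$, each inheriting an upward-propagating radiation condition. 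A Plancherel computation on $\G_H$ then expresses the imaginary part of the flux as a sum of non-negative contributions from propagating and evanescent Fourier modes of both potentials, forcing the propagating components to vanish on $\G_H$.

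Finally, vanishing of the propagating Fourier modes of each scalar potential, combined with the UPRC representation, yields $\bm{u}\equiv 0$ in $U_H$ by a Rellich-type argument analogous to the acoustic rough-surface theory in \cite{SC99,SCZ99}. Strong ellipticity of the Lam\'e operator $\Delta^{*}$ then produces unique continuation from $U_H$ into the whole of $\Om$, completing the proof. The step I expect to be the main obstacle is establishing sufficient horizontal decay of $\bm{u}$ on $\G_H$ from only an $L^{\infty}$-density UPRC and the weak $|x_2|^{\beta}$ growth bound, because the density $\bm{\phi}$ need not decay at infinity and the non-propagating correction $\bm{U}(x,y)$ has delicate asymptotics coming from the Fourier integral with square-root branch points at $\tau=\pm\ka_{\rm p},\pm\ka_{\rm s}$; a deformation of the $\tau$-contour around these branch points, exploiting the fact that $\gamma_{\rm p}\gamma_{\rm s}+\tau^{2}$ has no real zeros, should give the required decay rate.
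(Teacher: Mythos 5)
You should first note that the paper itself offers no proof of Theorem \ref{thm1}: it is quoted verbatim from \cite[Theorem 4.6]{TA01}, whose proof is indeed built on energy (Betti) and Rellich-type identities together with an angular-spectrum analysis on $\G_H$, so your overall plan points in the right direction. However, there is a genuine gap precisely at the step you yourself identify as the main obstacle. You propose to kill the lateral boundary terms over $\{|x_1|=A\}$ by first deriving horizontal decay of $\bm u$ and its derivatives on horizontal lines from the UPRC \eqref{a3} via an oscillatory-integral analysis of $\bm G_{D,H}$ and of the Fourier integral defining $\bm U(x,y)$. This cannot work as stated: the density $\bm\phi$ is only in $[L^\infty(\G_H)]^2$ and need not decay, and bounded upward-propagating fields (for instance fields behaving like reflected plane waves, which genuinely occur in this problem class) have no horizontal decay whatsoever, no matter how carefully you deform the $\tau$-contour around the branch points $\tau=\pm\ka_{\rm p},\pm\ka_{\rm s}$. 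The proof in \cite{TA01} neither establishes nor needs such decay; the identities are applied on truncated domains and the lateral contributions are controlled by integrating over the truncation parameter and exploiting the growth condition \eqref{a8} together with local elliptic estimates up to the boundary. That control of the lateral terms without decay is where the real work lies, and your sketch replaces it with an assumption that is false in general.

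A second, smaller gap concerns the passage from the flux identity to $\bm u\equiv 0$ in $U_H$. The imaginary part of the Betti flux on $\G_H$ only yields that the propagating part of the angular spectrum carries no energy; to eliminate the evanescent part and conclude that the trace on $\G_H$ vanishes you need, in addition, a genuine Rellich identity (multiplier $\partial_{x_2}\ov{\bm u}$), and for the Navier system this identity produces boundary terms with no obvious sign — handling them through the P/S (Helmholtz) decomposition is exactly the elastic-specific contribution of \cite{TA01}, which your sketch compresses into ``a Rellich-type argument analogous to the acoustic theory'' of \cite{SC99,SCZ99}. Once $\bm u\equiv 0$ in $U_H$ is known, your concluding step is fine: solutions of the Navier equation are real-analytic, so vanishing on $U_H$ propagates to the connected set $\Om$ and gives $\bm u^{\rm sc}\equiv 0$.
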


The existence of the solution to the problem (DP) has been investigated in \cite{TA02} by integral 
equation method. The main idea is to seek for a solution in the form of a combined single- and 
double-layer potential
\begin{equation}\label{a9}
{\bm u}^{\rm sc}(x)=\int_{\Gamma}\left[{\bm \Pi}_{D,h}^{(2)}(x,y)-{\rm i}\eta\bm{G}_{D,h}(x,y)\right]\bm{\psi}(y)ds(y),\quad x\in \Omega,
\end{equation}
where ${\bm \Pi}_{D,h}^{(2)}(x,y)$ is defined similar as
${\bm \Pi}_{D,H}^{(2)}(x,y)$ in \eqref{a3} with $\nu$ being the unit normal  on $\Gamma$ pointing to $\Omega$,
 $\bm{\psi}\in\left[BC(\Gamma)\cap H^{1/2}_{\rm loc}(\Gamma)\right]^2$ and $\eta$ is a 
complex number satisfying ${\rm Re}(\eta)>0$. Throughout this paper, to ensure that 
${\bm \Pi}_{D,h}^{(2)}(x,y)$ has a weak singularity while $|x-y|\to 0$ for $x,y\in\Gamma$, 
$\widetilde{\mu}$ and $\widetilde{\lambda}$ in (\ref{a5}) are chosen to be
\be\label{xx1}
\widetilde{\mu}=\frac{\mu(\lambda+\mu)}{\lambda+3\mu},\quad\widetilde{\lambda}
=\frac{(\lambda+\mu)(\lambda+2\mu)}{\lambda+3\mu}.
\en
By doing so, it follows from Theorems 2.6 and 2.7, and Lemma 2.8 in \cite{TA02} that 
$\bm{u}^{\rm sc}$ given by (\ref{a9}) satisfies 
$\bm{u}^{\rm sc}\in\left[C^2(\Om)\cap C(\ov{\Omega})\cap H^1_{\rm loc}(\Omega)\right]^2$. 
By Theorems 2.4 and 3.2 in \cite{TA02}, it can be deduced that
$\bm{u}^{\rm sc}$ given by (\ref{a9}) satisfies the Navier equation (\ref{a1}) and
the UPRC (\ref{a3}). Further, as a consequence of Theorems 2.1 and 2.3 {\color{ljl} in \cite{TA02}}, 
$\bm{u}^{\rm sc}$ given by (\ref{a9}) satisfies the vertical growth rate condition (\ref{a8}) 
with $\beta=-1/2$.

According to the jump relations for elastic single- and double-layer potentials shown in
Theorems 2.6 and 2.7 in \cite{TA02}, it is easy to see that $\bm{u}^{\rm sc}$ given by (\ref{a9}) 
is a solution to  the problem (DP) provided $\bm{\psi}$ is a solution to the following integral equation
\begin{eqnarray}\label{a10}
\frac{1}{2}\bm{\psi}(x)+\int_{\Gamma}\left[\bm{\Pi}_{D,h}^{(2)}(x,y)-{\rm i}\eta\bm{G}_{D,h}(x,y)\right]\bm{\psi}(y)ds(y)=\bm{g}(x),\quad x\in\Gamma,
\end{eqnarray}
which can be rewritten in the operator form
\begin{eqnarray}\label{a11}
(\bm{I}+\bm{D}_\Gamma-{\rm i}\eta\bm{S}_\Gamma)\bm{\psi}=2\bm{g}\quad{\rm on}\;\;\Gamma,
\end{eqnarray}
where $\bm{D}_\Gamma$ and $\bm{S}_\Gamma$ are the elastic double-layer and single-layer operators given by
\begin{eqnarray*}\label{a12}
(\bm{D}_\Gamma\bm{\psi})(x):=2\int_{\Gamma}\bm{\Pi}_{D,h}^{(2)}(x,y)\bm{\psi}(y)ds(y),\quad x\in\Gamma,\\ \label{a13}
(\bm{S}_\Gamma\bm{\psi})(x):=2\int_{\Gamma}\bm{G}_{D,h}(x,y)\bm{\psi}(y)ds(y),\quad x\in\Gamma.
\end{eqnarray*}

For $x,y\in\Gamma$, we denote $x=x(s)=(s,f(s))$, $y=y(t)=(t,f(t))$, 
$\widetilde{\bm{\psi}}(t):=\bm{\psi}(y(t))$, and $\widetilde{\bm{g}}(s):=\bm{g}(x(s))$.
By changing the variables, we rewrite $\bm{D}_\Gamma$ and $\bm{S}_\Gamma$ as the following operators
\begin{eqnarray}\label{a14}
(\bm{D}\widetilde{\bm{\psi}})(s):=2\int_{\R}\bm{\Pi}_{D,h}^{(2)}(x(s),y(t))
\widetilde{\bm{\psi}}(t)\sqrt{1+f'(t)^2}dt,\quad s\in\R,\\\label{a15}
(\bm{S}\widetilde{\bm{\psi}})(s):=2\int_{\R}\bm{G}_{D,h}(x(s),y(t))
\widetilde{\bm{\psi}}(t)\sqrt{1+f'(t)^2}dt,\quad s\in\R.
\end{eqnarray}
Then the solvability of (\ref{a11}) in $\left[BC(\Gamma)\right]^2$ is equivalent to finding the solution $\widetilde{\bm{\psi}}$ to the integral equation 
\begin{eqnarray}\label{a16}
(\bm{I}+\bm{D}-{\rm i}\eta\bm{S})\widetilde{\bm{\psi}}=2\widetilde{\bm{g}}
\end{eqnarray}
in $\left[BC(\R)\right]^2$, which is given in the following theorem. 

\begin{theorem}(\cite[Corollary 5.12]{TA02})\label{thm2}
For any $f\in B^{(0)}_{c,M}$, the integral operator $\bm{I}+\bm{D}-{\rm i}\eta\bm{S}: \left[BC(\R)\right]^2\to\left[BC(\R)\right]^2$ is bijective (and so boundedly invertible) with
\begin{eqnarray*}\label{a17}
\sup_{f\in B^{(0)}_{c,M}}\|(\bm{I}+\bm{D}-{\rm i}\eta\bm{S})^{-1}\|_{\left[BC(\R)\right]^2
\to\left[BC(\R)\right]^2}<\infty.
\end{eqnarray*}
Thus, the integral equation (\ref{a10}) and (\ref{a16}) have exactly one solution for every 
$f\in B^{(0)}_{c,M}$ with
\begin{eqnarray*}
\|\bm{u}^{\rm sc}\|_{\infty,\Gamma}\leq C\|\bm{g}\|_{\infty,\Gamma}
\end{eqnarray*}
for some constants $C>0$ depending only on $B^{(0)}_{c,M}$ and $\omega$.
\end{theorem}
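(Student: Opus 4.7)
The plan is to follow the Chandler--Wilde style solvability theory for integral equations on unbounded surfaces, adapted to the elastic setting as in \cite{TA02}. Step one is to verify that $\bm{I}+\bm{D}-\mathrm{i}\eta\bm{S}$ is a bounded linear operator on $[BC(\R)]^2$, uniformly in $f\in B^{(0)}_{c,M}$; this uses the weak (at worst logarithmic) singularity of the kernels at $s=t$ (guaranteed by the choice \eqref{xx1} of $\widetilde{\mu}$ and $\widetilde{\lambda}$), the decay of the half-space Green's tensor $\bm{G}_{D,h}$ away from the diagonal, and the $C^1(\overline{U_H})$-regularity of the correction matrix $\bm{U}$ recalled in Section~\ref{sec2}.

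For injectivity, I would take $\widetilde{\bm{\psi}}\in[BC(\R)]^2$ with $(\bm{I}+\bm{D}-\mathrm{i}\eta\bm{S})\widetilde{\bm{\psi}}=0$, lift it to a density $\bm{\psi}$ on $\Gamma$, and form the combined potential $\bm{u}^{\mathrm{sc}}$ in \eqref{a9}. By the discussion preceding \eqref{a10}, $\bm{u}^{\mathrm{sc}}$ satisfies the Navier equation in $\Omega$, the UPRC \eqref{a3}, the growth condition \eqref{a8} with $\beta=-1/2$, and has zero Dirichlet trace on $\Gamma$; Theorem~\ref{thm1} then forces $\bm{u}^{\mathrm{sc}}\equiv 0$ in $\Omega$. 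I would next view the same combined potential in the region below $\Gamma$: the jump relations for elastic single- and double-layer potentials produce there a solution of the Navier equation whose Dirichlet trace and generalised traction on $\Gamma$ are coupled by an impedance-type relation with coefficient proportional to $\mathrm{i}\eta$. Since $\mathrm{Re}(\eta)>0$, a Rellich / Betti identity on a large strip intersected with the lower domain yields vanishing of that solution, and one further use of the jump relations gives $\widetilde{\bm{\psi}}=0$.

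The main difficulty is establishing surjectivity together with the uniform bound on the inverse. The operator $\bm{I}+\bm{D}-\mathrm{i}\eta\bm{S}$ is \emph{not} of the form identity-plus-compact on $[BC(\R)]^2$, so classical Riesz--Fredholm theory fails outright. Here I would adopt the framework used by Arens in \cite{TA02} (extending Chandler--Wilde's acoustic analysis): realise $\bm{D}-\mathrm{i}\eta\bm{S}$ as an element of a class of integral operators on $\R$ for which solvability is equivalent to injectivity, exploiting local compactness on bounded subintervals together with equicontinuity coming from $f\in B^{(0)}_{c,M}$. The uniform estimate then follows by a compactness-and-contradiction argument: if it failed, one could pick $f_n\in B^{(0)}_{c,M}$ and $\widetilde{\bm{\psi}}_n$ with $\|\widetilde{\bm{\psi}}_n\|_\infty=1$ and $(\bm{I}+\bm{D}_n-\mathrm{i}\eta\bm{S}_n)\widetilde{\bm{\psi}}_n\to 0$, translate horizontally so that $|\widetilde{\bm{\psi}}_n|$ attains a uniformly positive value near $0$, and use Arzel\`a--Ascoli together with the horizontal translation invariance of the half-space Green's tensor to extract a limit pair $(f_\infty,\widetilde{\bm{\psi}}_\infty)$ with $f_\infty\in B^{(0)}_{c,M}$ and $\widetilde{\bm{\psi}}_\infty\ne 0$ in the kernel of the limit operator, contradicting injectivity applied at $f_\infty$.

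The genuinely hard step is this last one: the unbounded-surface Fredholm / uniform-invertibility machinery. The singularity analysis of the kernels and the injectivity argument rest on tools already available in the excerpt (namely \eqref{xx1}, the structure of $\bm{G}_{D,h}$, and Theorem~\ref{thm1}), whereas the limit-operator / compactness argument is where the substantive work of \cite[Corollary~5.12]{TA02} actually lies.
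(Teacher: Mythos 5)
The paper does not prove this theorem: it is imported verbatim from \cite[Corollary 5.12]{TA02}, so there is no in-paper argument to compare against, and your outline is a faithful reconstruction of the strategy of that reference --- injectivity via the combined potential \eqref{a9}, Theorem~\ref{thm1}, the jump relations and a Betti/Rellich-type identity exploiting ${\rm Re}(\eta)>0$; surjectivity and the uniform bound over $B^{(0)}_{c,M}$ via the Chandler--Wilde-type solvability theory for integral equations on the real line, in which injectivity implies bounded invertibility uniformly over suitable families of kernels. One small correction to your injectivity step: the ``region below $\Gamma$'' should be the strip $U_h\setminus\overline{\Omega}$ bounded below by $\Gamma_h$ (where one uses that $\bm{G}_{D,h}$ vanishes on $\Gamma_h$), not a full lower half-plane, since the potential is only defined in $U_h$.
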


\begin{remark}
By Theorems \ref{thm1} and \ref{thm2}, the problem (DP) has a unique solution.
\end{remark}

\section{The singularity for the kernel of $\bm{D}-{\rm i}\eta \bm{S}$}\label{sec3}\setcounter{equation}{0}

This section is devoted to analyzing the singularity of the boundary integral equation (\ref{a16}), 
which will be used for the further investigation on the Nystr\"{o}m method in Section \ref{sec4}.
The main idea is to write the kernel $\bm{A}(s,t)$ of the integral operator 
$\bm{D}-{\rm i}\eta\bm{S}$ in (\ref{a16}) in the following form
\begin{eqnarray}\label{b1}
\bm{A}(s,t)=\frac{1}{2\pi}\bm{B}(s,t)\ln\left(4\sin^2\frac{s-t}{2}\right)+\bm{C}(s,t),\quad s,t\in\R,s\neq t,
\end{eqnarray}
with smooth matrix functions $\bm{B}(s,t)$ and $\bm{C}(s,t)$ (see the formulas (\ref{e7}) and (\ref{e8}) below). For the details on the smoothness of $\bm{B}(s,t)$ and $\bm{C}(s,t)$,
see Remark \ref{thm42}.

According to (\ref{a4}), (\ref{a6}), (\ref{a14}), and (\ref{a15}), the operators $\bm{D}$ and $\bm{S}$ can be decomposed into two parts as follows
\begin{eqnarray*}\label{b7}
\bm{D}=\bm{D}_1-\bm{D}_2\quad{\rm and}\quad\bm{S}=\bm{S}_1-\bm{S}_2,
\end{eqnarray*}
where
\begin{eqnarray}\label{b9}
&&(\bm{D}_1\widetilde{\bm{\psi})}(s):=2\int_{\Gamma}\bm{\Pi}_1^{(2)}(x(s),y(t))
\widetilde{\bm{\psi}}(t)\sqrt{1+f'(t)^2}dt,\quad s\in\R,\\ \label{b10}
&&(\bm{D}_2\widetilde{\bm{\psi})}(s):=2\int_{\Gamma}\bm{\Pi}_2^{(2)}(x(s),y(t))
\widetilde{\bm{\psi}}(t)\sqrt{1+f'(t)^2}dt,\quad s\in\R,\\\label{b11}
&&(\bm{S}_1\widetilde{\bm{\psi})}(s):=2\int_{\Gamma}\bm{G}(x(s),y(t))
\widetilde{\bm{\psi}}(t)\sqrt{1+f'(t)^2}dt,\quad s\in\R,\\\label{b12}
&&(\bm{S}_2\widetilde{\bm{\psi})}(s):=2\int_{\Gamma}\left[\bm{G}(x(s),y'(t))
-\bm{U}(x(s),y(t))\right]\widetilde{\bm{\psi}}(t)\sqrt{1+f'(t)^2}dt,\quad s\in\R,
\end{eqnarray}
with the components of $\bm{\Pi}_1^{(2)}(x,y)$ and $\bm{\Pi}_2^{(2)}(x,y)$ given by
\begin{eqnarray}\label{b13}
\Pi_{1,jk}^{(2)}(x,y):=\left(\bm{P}^{(y)}\left(\bm{G}_{j\cdot}(x,y)\right)^\top\right)_k
\quad{\rm and}\quad \Pi_{2,jk}^{(2)}(x,y):=\left(\bm{P}^{(y)}\left(\bm{G}_{j\cdot}(x,y')-\bm{U}_{j\cdot}(x,y)\right)^\top\right)_k.
\end{eqnarray}
Hence, the integral equation (\ref{a16}) can be rewritten as
\begin{eqnarray*}\label{b14}
\left[\bm{I}+\bm{D}_1-{\rm i}\eta \bm{S}_1-(\bm{D}_2-{\rm i}\eta \bm{S}_2)\right]\widetilde{\bm{\psi}}=2\widetilde{\bm{g}} \quad{\rm on}\;\;\R.
\end{eqnarray*}
The remaining part of this section consists of three subsections, which focus on the singularity 
analysis of the kernels in the integral operators $\bm{S}_1$, $\bm{D}_1$, and 
$\bm{D}_2-{\rm i}\eta\bm{S}_2$, respectively.

\subsection{Separating the logarithmic part of $\bm{S}_1$}

This subsection is devoted to separating the logarithmic part of the operator $\bm{S}_1$. 
We first introduce some notations which will be used later. For $x(s),~y(t)\in\Gamma$, 
we define the distance between $x(s)$ and $y(t)$ as
\begin{eqnarray*}
r=r(s,t):=|x(s)-y(t)|=\sqrt{(s-t)^2+(f(s)-f(t))^2},
\end{eqnarray*}
and define the upward unit normal at $x(s)$ and $y(t)$ as
\begin{eqnarray*}
&&\nu(s)=(\nu_1(s),\nu_2(s))^\top\quad{\rm with}\quad \nu_1(s)=-\frac{f'(s)}{\sqrt{1+f'(s)^2}}\;\;{\rm and}\;\;  \nu_2(s)=\frac{1}{\sqrt{1+f'(s)^2}},\\
&&\nu(t)=(\nu_1(t),\nu_2(t))^\top\quad\;{\rm with}\quad \nu_1(t)=-\frac{f'(t)}{\sqrt{1+f'(t)^2}}\;\;\;{\rm and}\;\;  \nu_2(t)=\frac{1}{\sqrt{1+f'(t)^2}}.
\end{eqnarray*}
{\color{ljl}Then for convenience, we define the vector $l$ and $l^{\perp}$ as }
\begin{eqnarray*}
&&l(s)=\sqrt{1+f'(s)^2}\nu(s)\quad{\rm and}\quad l^{\perp}(s)=\sqrt{1+f'(s)^2}\nu^{\perp}(s),\\
&&l(t)=\sqrt{1+f'(t)^2}\nu(t)\quad{\rm and}\quad l^{\perp}(t)=\sqrt{1+f'(t)^2}\nu^{\perp}(t).
\end{eqnarray*}

In terms of (\ref{a7}), a direct calculation shows that each element  $G_{jk}$ of the Green's tensor $\bm{G}$ can be represented as
\begin{eqnarray}\nonumber
G_{jk}(x,y)&=&\left[\frac{\rm i}{4\mu}H^{(1)}_0(\kappa_{\rm s} r)-\frac{\rm i}{4\om^2}\frac{\kappa_{\rm s}H^{(1)}_1(\kappa_{\rm s} r)-\kappa_{\rm p}H^{(1)}_1(\kappa_{\rm p} r)}{r}\right]\delta_{jk}\\
\label{c1}&&+\frac{\rm i}{4\omega^2}\frac{\kappa_{\rm s}^2H^{(1)}_2(\kappa_{\rm s} r)-\kappa_{\rm p}^2H^{(1)}_2(\kappa_{\rm p} r)}{r^2}(x_j-y_j)(x_k-y_k),\quad j,k=1,2,
\end{eqnarray}
where $x=(x_1,x_2),~y=(y_1,y_2)$ and $\delta_{jk}~(j,k=1,2)$ is the Kronecker delta function satisfying $\delta_{jk}=1$ for $j=k$ and $\delta_{jk}=0$ for $j\neq k$. Substituting $x=x(s):=\left(x_1(s),x_2(s)\right)$ and $y=y(t):=\left(y_1(t),y_2(t)\right)$ into (\ref{b11}) gives that $\bm{S}_1$ can be rewritten as
\begin{eqnarray*}\label{c2}
(\bm{S}_1\widetilde{\bm{\psi}})(s)=\int_{\R}\bm{A}^{(1)}(s,t)\widetilde{\bm{\psi}}(t)dt,\quad
s\in\R,
\end{eqnarray*}
with the element of the matrix $\bm{A}^{(1)}(s,t)$ given by
\begin{eqnarray*}\nonumber
A_{jk}^{(1)}(s,t)&=&2G_{jk}(x(s),y(t))\sqrt{1+f'(t)^2}\\\nonumber
&=&\left\{\left[\frac{\rm i}{2\mu}H_0^{(1)}(\kappa_{\rm s}r)-\frac{\rm i}{2\omega^2}\frac{\kappa_{\rm s}H_1^{(1)}(\kappa_{\rm s}r)-\kappa_{\rm p}H_1^{(1)}(\kappa_{\rm p}r)}{r}\right]\delta_{jk}\right.\\\label{c3}
&&\left.+\frac{\rm i}{2\omega^2}\frac{\kappa_{\rm s}^2H_2^{(1)}(\kappa_{\rm s}r)-\kappa_{\rm p}^2H_2^{(1)}(\kappa_{\rm p}r)}{r^2}(x_j(s)-y_j(t))(x_k(s)-y_k(t))\right\}\sqrt{1+f'(t)^2},
\end{eqnarray*}
for $j,k=1,2$. Based on the singularity of $A^{(1)}_{jk}(s,t)$, we can separate the logarithmic part and 
decompose $A_{jk}^{(1)}(s,t)$ as
\begin{eqnarray*}\label{c4}
A_{jk}^{(1)}(s,t)=B_{jk}^{(1)}(s,t)\ln |s-t|+C_{jk}^{(1)}(s,t),
\end{eqnarray*}
where
\begin{eqnarray}\nonumber
B_{jk}^{(1)}(s,t)&=&\frac{1}{\pi}\left\{\left[-\frac{1}{\mu}J_0(\kappa_{\rm s} r)+\frac{1}{\om^2}\frac{\kappa_{\rm s}J_1(\kappa_{\rm s} r)-\kappa_{\rm p}J_1(\kappa_{\rm p} r)}{r}\right]\delta_{jk} \right.\\\label{c5}
 &&-\left.\frac{1}{\om^2}\frac{\kappa_{\rm s}^2J_2(\kappa_{\rm s} r)-\kappa_{\rm p}^2J_2(\kappa_{\rm p} r)}{r^2}(x_j(s)-y_j(t))(x_k(s)-y_k(t))\right\}\sqrt{1+f'(t)^2},\\\label{c6}
C_{jk}^{(1)}(s,t)&=&A_{jk}^{(1)}(s,t)-B_{jk}^{(1)}(s,t)\ln |s-t|.
\end{eqnarray}
To get the exact expressions of $B_{jk}^{(1)}(s,t)$ and $C_{jk}^{(1)}(s,t)$ while $s=t$ 
for numerical computation, we need to use the following asymptotic behavior of Bessel functions 
(see \cite[(5.16.1)--(5.16.3)]{NN} and \cite{LL19}): as $r\to 0$,
\begin{eqnarray}\label{c7}
&&Y_0(r)\approx-\frac{2}{\pi}\ln\frac{2}{r},\quad H_0^{(1)}(r)\approx 
-\frac{2{\rm i}}{\pi}\ln\frac{2}{r},\\\label{c8}
&&J_n(r)\approx\frac{r^n}{2^n\Gamma(1+n)},\quad Y_n(r)\approx-\frac{\Gamma(n)}{\pi}\left(\frac{2}{r}\right)^n, 
\quad H_n^{(1)}(r)\approx -\frac{{\rm i}\Gamma(n)}{\pi}\left(\frac{2}{r}\right)^n,\;\; n> 0,\\ \label{d6}
&&\kappa_{\rm s}^2H_2^{(1)}(\kappa_{\rm s}r)-\kappa_{\rm p}^2H_2^{(1)}(\kappa_{\rm p}r)
=\frac{\rm i}{\pi}(\kappa_{\rm p}^2-\kappa_{\rm s}^2)+\frac{\rm i}{4\pi}\left(\kappa_{\rm s}^4
\ln\frac{\kappa_{\rm s}r}{2}-\kappa_{\rm p}^4\ln\frac{\kappa_{\rm p}r}{2}\right)r^2+O(r^2),\\ \label{d7}
&&\kappa_{\rm s}^3H_3^{(1)}(\kappa_{\rm s}r)-\kappa_{\rm p}^3H_3^{(1)}(\kappa_{\rm p}r)
=\frac{2\rm i}{\pi}(\kappa_{\rm p}^2-\kappa_{\rm s}^2)\frac{1}{r}+\frac{\rm i}{4\pi}(\kappa_{\rm p}^4
-\kappa_{\rm s}^4)r+O(r^3\ln\frac{r}{2}),
\end{eqnarray}
where $\Gamma(n)$ denotes the gamma function, and the notation $\varphi_1(r)\approx\varphi_2(r)$ 
for functions $\varphi_1$ and $\varphi_2$ means that $\lim\limits_{r\to 0}\varphi_1(r)/\varphi_2(r)=1$. 
With the aid of $H_n^{(1)}(r)=J_n(r)+{\rm i}Y_n(r)$ and the ascending series expansions of the Bessel 
functions (see \cite[(3.97), (3.98)]{CK19}), we can derive the following limits and asymptotic formulas
\begin{align*}
\lim_{r\to 0}\left[H_0^{(1)}(\kappa_{\rm s}r)-\frac{2{\rm i}}{\pi}J_0(\kappa_{\rm s}r)\ln(\kappa_{\rm s}r)\right]
&=\alpha,\\
\lim_{r\to 0}\frac{1}{r}\left\{\kappa_{\rm s}\left[H_1^{(1)}(\kappa_{\rm s}r)-\frac{2{\rm i}}{\pi}J_1(\kappa_{\rm s}r)\ln(\kappa_{\rm s}r)\right]-\kappa_{\rm p}\left[H_1^{(1)}(\kappa_{\rm p}r)-\frac{2{\rm i}}{\pi}J_1(\kappa_{\rm p}r)\ln(\kappa_{\rm p}r)\right]\right\}
&=\frac{\alpha}{2}(\kappa_{\rm s}^2-\kappa_{\rm p}^2),\\\nonumber
\frac{1}{r^2}\left\{\kappa_{\rm s}^2\left[H_2^{(1)}(\kappa_{\rm s}r)-\frac{2{\rm i}}{\pi}J_2(\kappa_{\rm s}r)\ln(\kappa_{\rm s}r)\right]-\kappa_{\rm p}^2\left[H_2^{(1)}(\kappa_{\rm p}r)-\frac{2{\rm i}}{\pi}J_2(\kappa_{\rm p}r)\ln(\kappa_{\rm p}r)\right]\right\}
&\approx \frac{\alpha}{8}(\kappa_{\rm s}^4-\kappa_{\rm p}^4)-\frac{\rm i}{\pi r^2}(\kappa_{\rm s}^2-\kappa_{\rm p}^2),
\end{align*}
where $\alpha=1+\frac{2{\rm i}}{\pi}(C_E-\ln 2)$ with $C_E$ standing for the Euler's constant . The above asymptotic formulas implies that the diagonal terms are
\begin{eqnarray}\label{c12}
B_{jk}^{(1)}(s,s)&=&\frac{1}{\pi}\left[-\frac{1}{\mu}+1/2(c_{\rm s}^2-c_{\rm p}^2)\right]\delta_{jk}\sqrt{1+f'(s)^2},\\\nonumber
C_{jk}^{(1)}(s,s)&=&\sqrt{1+f'(s)^2}\Bigg\{\left[\frac{\rm i}{4}\alpha (c_{\rm s}^2+c_{\rm p}^2)-\frac{1}{2\pi}\left(c_{\rm s}^2\ln(\kappa_{\rm s}\sqrt{1+f'(s)^2})+c_{\rm p}^2\ln(\kappa_{\rm p}\sqrt{1+f'(s)^2})\right)\right]\delta_{jk}\\\label{c13}
&&+\frac{1}{2\pi}(c_{\rm s}^2-c_{\rm p}^2)\frac{l^{\perp}_j(s) l^{\perp}_k(s)}{1+f'(s)^2}\Bigg\},
\end{eqnarray}
where $l^{\perp}_j$ and $l^{\perp}_k~(j,k=1,2)$ denote the components of the vector $l^{\perp}$.

\subsection{Separating the logarithmic part of $\bm{D}_1$}

The purpose of this subsection is to separate the logarithmic part of the operator $\bm{D}_1$. 
Due to (\ref{b9}), (\ref{b13}), (\ref{a5}), and (\ref{c1}), we can write $\bm{D}_1$ as
\begin{eqnarray*}\label{d1}
(\bm{D}_1\widetilde{\bm{\psi}})(s)=\int_{\R}\bm{A}^{(2)}(s,t)\widetilde{\bm{\psi}}(t)dt,
\quad s\in\R,
\end{eqnarray*}
where {\color{ljl} the elements of the matrix $\bm{A}^{(2)}(s,t)$ are given by}
\begin{eqnarray*}\nonumber
A_{jk}^{(2)}(s,t)&:=&2\left[\bm{P}^{(y)}\left(\bm{G}_{j\cdot}(x,y)\right)^\top\right]_k\sqrt{1+f'(t)^2}\\\nonumber
&=&(\mu+\widetilde{\mu})\left\{\left[-\frac{\rm i}{2\mu}\frac{\kappa_{\rm s}H_1^{(1)}(\kappa_{\rm s}r)}{r}+\frac{\rm i}{2\omega^2}\frac{\kappa_{\rm s}^2H_2^{(1)}(\kappa_{\rm s}r)-\kappa_{\rm p}^2H_2^{(1)}(\kappa_{\rm p}r)}{r^2}\right]\delta_{jk}\right.\\\nonumber
&&\left.-\frac{\rm i}{2\omega^2}\frac{\kappa_{\rm s}^3H_3^{(1)}(\kappa_{\rm s}r)-\kappa_{\rm p}^3H_3^{(1)}(\kappa_{\rm p}r)}{r^3}(x_j(s)-y_j(t))(x_k(s)-y_k(t))\right\}\left[(s-t)f'(t)+f(t)-f(s)\right]\\\nonumber
&&-(\mu+\widetilde{\mu})\frac{\rm i}{2\omega^2}\frac{\kappa_{\rm s}^2H_2^{(1)}(\kappa_{\rm s}r)-\kappa_{\rm p}^2H_2^{(1)}(\kappa_{\rm p}r)}{r^2}\left[(x_j(s)-y_j(t))l_k(t)+(x_k(s)-y_k(t))l_j(t)\right]\\\nonumber
&&+\widetilde{\lambda}\left[\frac{\rm i}{2\mu}\frac{\kappa_{\rm s}H_1^{(1)}(\kappa_{\rm s}r)}{r}-\frac{2\rm i}{\omega^2}\frac{\kappa_{\rm s}^2H_2^{(1)}(\kappa_{\rm s}r)-\kappa_{\rm p}^2H_2^{(1)}(\kappa_{\rm p}r)}{r^2}\right.\\\nonumber
&&\left.+\frac{\rm i}{2\omega^2}\frac{\kappa_{\rm s}^3H_3^{(1)}(\kappa_{\rm s}r)-\kappa_{\rm p}^3H_3^{(1)}(\kappa_{\rm p}r)}{r}\right](x_j(s)-y_j(t))l_k(t)\\\label{d2}
&&-\widetilde{\mu}\frac{\rm i}{2\mu}\frac{\kappa_{\rm s}H_1^{(1)}(\kappa_{\rm s}r)}{r}\left[(f(s)-f(t))\delta_{j1}-(s-t)\delta_{j2}\right]l^{\perp}_k(t).
\end{eqnarray*}
{\color{ljl}Based} on the singularity of $A_{jk}^{(2)}(s,t)$, we can separate the logarithmic part of $A_{jk}^{(2)}(s,t)$ as
\begin{eqnarray*}\label{d3}
A_{jk}^{(2)}(s,t)=B_{jk}^{(2)}(s,t)\ln |s-t|+C_{jk}^{(2)}(s,t),
\end{eqnarray*}
where
\begin{eqnarray}\nonumber
B_{jk}^{(2)}(s,t)&:=&(\mu+\widetilde{\mu})\left\{\left[\frac{1}{\pi\mu}\frac{\kappa_{\rm s}J_1(\kappa_{\rm s}r)}{r}-\frac{1}{\pi\omega^2}\frac{\kappa_{\rm s}^2J_2(\kappa_{\rm s}r)-\kappa_{\rm p}^2J_2(\kappa_{\rm p}r)}{r^2}\right]\delta_{jk}\right.\\\nonumber
&&+\left.\frac{1}{\pi\omega^2}\frac{\kappa_{\rm s}^3J_3(\kappa_{\rm s}r)-\kappa_{\rm p}^3J_3(\kappa_{\rm p}r)}{r^3}(x_j(s)-y_j(t))(x_k(s)-y_k(t))\right\}\left[(s-t)f'(t)+f(t)-f(s)\right]\\\nonumber
&&+(\mu+\widetilde{\mu})\frac{1}{\pi\omega^2}\frac{\kappa_{\rm s}^2J_2(\kappa_{\rm s}r)-\kappa_{\rm p}^2J_2(\kappa_{\rm p}r)}{r^2}\left[(x_j(s)-y_j(t))l_k(t)+(x_k(s)-y_k(t))l_j(t)\right]\\\nonumber
&&+\widetilde{\lambda}\left[-\frac{1}{\pi\mu}\frac{\kappa_{\rm s}J_1(\kappa_{\rm s}r)}{r}+\frac{4}{\pi\omega^2}\frac{\kappa_{\rm s}^2J_2(\kappa_{\rm s}r)
	-\kappa_{\rm p}^2J_2(\kappa_{\rm p}r)}{r^2}\right.\\\nonumber
&&-\left.\frac{1}{\pi\omega^2}\frac{\kappa_{\rm s}^3J_3(\kappa_{\rm s}r)-\kappa_{\rm p}^3J_3(\kappa_{\rm p}r)}{r}\right](x_j(s)-y_j(t))l_k(t)\\\label{d4}
&&+\widetilde{\mu}\frac{1}{\pi\mu}\frac{\kappa_{\rm s}J_1(\kappa_{\rm s}r)}{r}\left[(f(s)-f(t))\delta_{j1}-(s-t)\delta_{j2}\right]l^{\perp}_k(t),\label{d5}\\
C_{jk}^{(2)}(s,t)&:=&A_{jk}^{(2)}(s,t)-B_{jk}^{(2)}(s,t)\ln |s-t|.\label{d6}
\end{eqnarray}
Similar as the previous subsection, we need to get the exact expressions of $B_{jk}^{(2)}(s,t)$ and $C_{jk}^{(2)}(s,t)$ while $s=t$ for numerical computation.
Using equations (\ref{c7})--(\ref{d7}), choosing $\widetilde{\mu}$ as in (\ref{xx1}) and by a direct 
but lengthy calculation we obtain the diagonal terms
\begin{eqnarray}\label{d8}
&&B_{jk}^{(2)}(s,s)=0,\\\label{d9}
&&C_{jk}^{(2)}(s,s)=-\frac{1}{2\pi}\frac{f''(s)}{1+f'(s)^2}\left\{\left[-1+\frac{\mu+\widetilde{\mu}}{2}
(c_{\rm s}^2-c_{\rm p}^2)\right]\delta_{jk}
+(c_{\rm p}^2-c_{\rm s}^2)\frac{l_j^{\perp}(s)l_k^{\perp}(s)}{1+f'(s)^2}\right\}.
\end{eqnarray}

\subsection{The computation of $\bm{D}_2-{\rm i}\eta\bm{S}_2$}

Observing from (\ref{b10}), (\ref{b12}), and (\ref{b13}), the kernel of the integral operator 
$\bm{D}_2-{\rm i}\eta\bm{S}_2$ is related to $\bm{G}(x,y')$ and $\bm{U}(x,y)$ with $x=(s, f(s))\in\Gamma$,  
$y=(t, f(t))\in\Gamma$, and $y'=(t, 2h-f(t))$. Due to $h<\inf_{x_1\in\R}f(x_1)$, it is readily seen that 
there is a positive distance between $x$ and $y'$, which leads to that $\bm{G}(x,y')$ is smooth, and 
by \cite[Theorem 2.1]{TA02}, we have 
$\bm{U}(x,y)\in \left[C^{\infty}(U_h)\cap C^1(\overline{U_h})\right]^{2\times 2}$. 
It follows from (\ref{b10}), (\ref{b12}), (\ref{b13}), (\ref{a5}), (\ref{a7}) and a direct but lengthy 
calculation that the integral operator $\bm{D}_2-{\rm i}\eta\bm{S}_2$ can be rewritten as integral 
on the real line, which reads
\begin{eqnarray*}\label{e1}
\left[(\bm{D}_2-{\rm i}\eta\bm{S}_2)\widetilde{\bm{\psi}}\right](s)
=\int_{\R}\bm{A}^{(3)}(s,t)\widetilde{\bm{\psi}}(t)dt,\quad s\in\R,
\end{eqnarray*}
where
\begin{eqnarray}\label{x2}
 \bm{A}^{(3)}(s,t)=\bm{A}^{(4)}(s,t)-\bm{A}^{(5)}(s,t)
 \end{eqnarray}
with
\begin{eqnarray}\nonumber
&&A^{(4)}_{jk}(s,t):=2\left\{\left[\bm{P}^{(y)}\left(\bm{G}_{j\cdot}(x,y')\right)^\top\right]_k-{\rm i}
\eta G_{jk}(x,y')\right\}\sqrt{1+f'(t)^2}=:I_1+I_2+I_3+I_4,\\\nonumber
&&A_{jk}^{(5)}(s,t):=2\left\{\left[\bm{P}^{(y)}\left(\bm{U}_{j\cdot}(x,y')\right)^\top\right]_k-{\rm i}
\eta U_{jk}(x,y')\right\}\sqrt{1+f'(t)^2}.
\end{eqnarray}
Here, $I_j$ $(j=1,2,3,4)$ are defined by
\begin{eqnarray*}
I_1&=&(\mu+\widetilde{\mu})\left\{\left[-\frac{\rm i}{2\mu}\frac{\kappa_{\rm s}H_1^{(1)}(\kappa_{\rm s}r')}{r'}\delta_{jk}+\frac{\rm i}{2\omega^2}\frac{\kappa_{\rm s}^2H_2^{(1)}(\kappa_{\rm s}r')-\kappa_{\rm p}^2H_2^{(1)}(\kappa_{\rm p}r')}{r'^2}\delta_{jk}\right.\right.\\\nonumber
&&\left.\left.-\frac{\rm i}{2\omega^2}\frac{\kappa_{\rm s}^3H_3^{(1)}(\kappa_{\rm s}r')-\kappa_{\rm p}^3H_3^{(1)}(\kappa_{\rm p}r')}{r'^3}(x_j(s)-y'_j(t))(x_k(s)-y'_k(t))\right]\left[(s-t)f'(t)+f(t)+f(s)-2h\right]\right.\\\nonumber
&&\left.+\frac{\rm i}{2\omega^2}\frac{\kappa_{\rm s}^2H_2^{(1)}(\kappa_{\rm s}r')-\kappa_{\rm p}^2H_2^{(1)}(\kappa_{\rm p}r')}{r'^2}\left[(x_k(s)-y'_k(t))q_jl_j(t)+(x_j(s)-y'_j(t))q_kl_k(t)\right]\right\},
\\
I_2&=&\widetilde{\lambda}\left\{\left[-\frac{\rm i}{2\mu}\frac{\kappa_{\rm s}H_1^{(1)}(\kappa_{\rm s}r')}{r'}+\frac{\rm i}{\omega^2}\frac{\kappa_{\rm s}^2H_2^{(1)}(\kappa_{\rm s}r')-\kappa_{\rm p}^2H_2^{(1)}(\kappa_{\rm p}r')}{r'^2}\right](y_j(s)-x'_j(t))\right.\\\nonumber
&&\left.+\frac{\rm i}{2\omega^2}\frac{\kappa_{\rm s}^3H_3^{(1)}(\kappa_{\rm s}r')-\kappa_{\rm p}^3H_3^{(1)}(\kappa_{\rm p}r')}{r'^3}(x_j(s)-y'_j(t))\left[(t-s)^2-(f(t)+f(s)-2h)^2\right]\right\}l_k(t),
\\
I_3&=&-\widetilde{\mu}\left\{\left[-\frac{\rm i}{2\mu}\frac{\kappa_{\rm s}H_1^{(1)}(\kappa_{\rm s}r')}{r'}+\frac{\rm i}{\omega^2}\frac{\kappa_{\rm s}^2H_2^{(1)}(\kappa_{\rm s}r')-\kappa_{\rm p}^2H_2^{(1)}(\kappa_{\rm p}r')}{r'^2}\right]\left[(s-t)\delta_{j2}+(f(s)+f(t)-2h)\delta_{j1}\right]\right.\\\nonumber
&&\left.-\frac{\rm i}{\omega^2}\frac{\kappa_{\rm s}^3H_3^{(1)}(\kappa_{\rm s}r')-\kappa_{\rm p}^3H_3^{(1)}(\kappa_{\rm p}r')}{r'^3}(s-t)(f(s)+f(t)-2h)(x_j(s)-y'_j(t))\right\}l_k^{\perp}(t),
\\
I_4&=&\eta\left\{\left[\frac{1}{2\mu}H_0^{(1)}(\kappa_{\rm s}r')-\frac{1}{2\omega^2}\frac{\kappa_{\rm s}H_1^{(1)}(\kappa_{\rm s}r')-\kappa_{\rm p}H_1^{(1)}(\kappa_{\rm p}r')}{r'}\right]\delta_{jk}\right.\\
&&\left.+\frac{1}{2\omega^2}\frac{\kappa_{\rm s}^2H_2^{(1)}(\kappa_{\rm s}r')-\kappa_{\rm p}^2H_2^{(1)}(\kappa_{\rm p}r')}{r'^2}(x_j(s)-y'_j(t))(x_k(s)-y'_k(t))\right\}\sqrt{1+f'(t)^2},
\end{eqnarray*}
where $q=(q_1, q_2)^\top=(-1,1)^\top$ and $r'$ denotes the distance between $x$ and $y'$, that is,
\begin{eqnarray*}
r'=r'(s,t):=\sqrt{(s-t)^2+(f(s)+f(t)-2h)^2}.
\end{eqnarray*}

According to {\color{ljl}the above three subsections on the analysis of $\bm{S}_1$, $\bm{D}_1$, 
and $\bm{D}_2-{\rm i}\eta\bm{S}_2$}, the integral operator $\bm{D}-{\rm i}\eta\bm{S}$ can be 
rewritten in the following form
\begin{eqnarray}\label{e3}
\left[(\bm{D}-{\rm i}\eta\bm{S})\widetilde{\bm{\psi}}\right](s)=\int_{\R}\bm{A}(s,t)
\widetilde{\bm{\psi}}(t)dt
\end{eqnarray}
with
\begin{eqnarray*}\label{e4}
\bm{A}(s,t)=-{\rm i}\eta \bm{A}^{(1)}(s,t)+\bm{A}^{(2)}(s,t)-\bm{A}^{(3)}(s,t)
:=\bm{B}^*(s,t)\ln |s-t|+\bm{C}^*(s,t),
\end{eqnarray*}
where $\bm{B}^*(s,t)$ and $\bm{C}^*(s,t)$ are
\begin{eqnarray}\label{e5}
\bm{B}^*(s,t)&:=&-{\rm i}\eta \bm{B}^{(1)}(s,t)+\bm{B}^{(2)}(s,t),\\\label{e6}
\bm{C}^*(s,t)&:=&-{\rm i}\eta \bm{C}^{(1)}(s,t)+\bm{C}^{(2)}(s,t)-\bm{A}^{(3)}(s,t).
\end{eqnarray}
In order to employ the Nystr\"{o}m method, we follow the ideas of \cite[Theorem 2.1]{MACK} and 
rewrite the integral kernel $\bm{A}(s,t)$ in the form (\ref{b1}) with $\bm{B}(s,t)$ and $\bm{C}(s,t)$ given by
\begin{eqnarray}\label{e7}
&&\bm{B}(s,t):=\pi \bm{B}^*(s,t)\chi(s-t),\\\label{e8}
&&\bm{C}(s,t):=\bm{B}^*(s,t)\left[(1-\chi(s-t))\ln|s-t|-\chi(s-t)
\ln\left(\frac{\sin(\frac{s-t}{2}}{\frac{s-t}{2}}\right)\right]
+\bm{C}^*(s,t),
\end{eqnarray}
for $s\neq t$, where $\chi\in C_0^{\infty}(\R)$ is a cut-off function defined by
\begin{equation}\no
	\chi(s)=\left\{\begin{array}{cc}1, &|s|\leq1,\\\left[1+\exp\left(\frac{1}{\pi-|s|}+\frac{1}{1-|s|}\right)\right]^{-1}, &
		1<|s|<\pi,\\0, &\pi\leq|s|.\end{array}\right.
\end{equation}
It is easy to see that $\chi$ satisfies $\chi(s)\in [0,1]\;{\rm for}\; s\in \R$, 
$\chi(s)=0\;{\rm for}\;|s|\geq \pi$, $ \chi(s)=1\;{\rm for}\;|s|\leq 1$, and 
$\chi(-s)=\chi(s)\;{\rm for}\;s\in \R.$

Finally, with the help of (\ref{c12}), (\ref{c13}), (\ref{d8}), (\ref{d9}), (\ref{e7}) 
and (\ref{e8}), we obtain that
\begin{eqnarray*}\label{e9}
\bm{B}(s,s)&:=&\pi\left[-{\rm i}\eta \bm{B}^{(1)}(s,s)+\bm{B}^{(2)}(s,s)\right],\\\label{e10}
\bm{C}(s,s)&:=&-{\rm i}\eta \bm{C}^{(1)}(s,s)+\bm{C}^{(2)}(s,s)-\bm{A}^{(3)}(s,s).
\end{eqnarray*}

\section{Convergence analysis of the Nystr\"{o}m method}\label{sec4}\setcounter{equation}{0}

The goal of this section is to establish the convergence result of the Nystr\"{o}m method for 
the boundary integral equation \eqref{a16}.
In views of \eqref{e3}, \eqref{a16} can be rewritten in the following form
\begin{eqnarray}\label{f2}
\widetilde{\bm{\psi}}(s)+\frac{1}{2\pi}\int_{-\infty}^{+\infty}\ln\left(4\sin^2\frac{s-t}{2}\right)
\bm{B}(s,t)\widetilde{\bm{\psi}}(t)dt+\int_{-\infty}^{+\infty}\bm{C}(s,t)\widetilde{\bm{\psi}}(t)dt
=2\widetilde{\bm{g}}(s),\quad s\in\R.
\end{eqnarray}
To get the numerical solution of (\ref{f2}), we truncate the infinite interval $(-\infty, +\infty)$ 
into a finite interval $(-cut, cut)$, and choose an equidistant set of knots $t_j:=-cut+j\pi/N$ 
for $j=0,1,...,2N cut/\pi$. If $\bm{B}(s,t)\in C_{0,\pi}^n(\R^2)$ and $\bm{C}(s,t)\in BC_p^n(\R^2)$ 
for some $p>1$ and some positive integer $n$, it follows from \cite{MACK} that the two integrals 
in (\ref{f2}) can be approximated by
\begin{eqnarray*}\label{f3} \frac{1}{2\pi}\int_{-\infty}^{+\infty}\ln\left(4\sin^2\frac{s-t}{2}\right)
\bm{B}(s,t)\widetilde{\bm{\psi}}(t)dt&\approx& \sum_{j\in\mathbb Z}R_j^{(N)}(s)\bm{B}(s,t_j)
\widetilde{\bm{\psi}}(t_j),\quad s\in\R,\\\label{f4}
\int_{-\infty}^{+\infty}\bm{C}(s,t)\widetilde{\bm{\psi}}(t)dt&\approx& 
\frac{\pi}{N}\sum_{j\in\mathbb Z}\bm{C}(s,t_j)\widetilde{\bm{\psi}}(t_j),\quad s\in\R,
\end{eqnarray*}
with the quadrature weights given by
\begin{eqnarray*}\label{f5}
R_j^{(N)}(s):=-\frac{1}{N}\left[\sum_{m=1}^{N-1}\frac{1}{m}\cos m(s-t_j)+\frac{1}{2N}\cos N(s-t_j)\right].
\end{eqnarray*}
{\color{ljl}Therefore, an approximated form of (\ref{f2}) is}
\begin{eqnarray}\label{f6}
\widetilde{\bm{\psi}}_N(s)+\sum_{j\in\mathbb Z}\alpha_j^{(N)}(s)\widetilde{\bm{\psi}}_N(t_j)=2\widetilde{\bm{g}}(s),\quad s\in\R,
\end{eqnarray}
with
\begin{eqnarray*}\label{f7}
\alpha_j^{(N)}(s):=R_j^{(N)}(s)\bm{B}(s,t_j)+\frac{\pi}{N}\bm{C}(s,t_j).
\end{eqnarray*}
{\color{ljl}The remaining part of this section is to study the convergence result for $\|\widetilde{\bm{\psi}}-\widetilde{\bm{\psi}}_N\|_{[L^{\infty}(\R)]^2}$, which is presented in the following theorem.}

\begin{theorem}\label{thm41}
Let $f\in B_{c,M}^{(n)}$ and $\widetilde{\bm{g}}\in [BC^{n}(\R)]^2$ for some $n\in\mathbb{N}$ and $c, M>0$. There {\color{ljl}exists $N_0\in\mathbb N$} such that (\ref{f6}) admits a uniquely determined numerical solution $\widetilde{\bm{\psi}}_N$ and
\begin{eqnarray*}\label{f8}
\|\widetilde{\bm{\psi}}-\widetilde{\bm{\psi}}_N\|_{[L^{\infty}(\R)]^2}\lesssim N^{-n}\|\widetilde{\bm{g}}\|_{[BC^n(\R)]^2}
\end{eqnarray*}
for $N>N_0$, where $\widetilde{\bm{\psi}}$ is the unique solution of (\ref{a16}).
\end{theorem}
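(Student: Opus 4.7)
The plan is to carry out the standard Nyström analysis in three stages: verifying the regularity of the splitting kernels, invoking the quadrature error estimates of \cite{MACK}, and closing with a collectively compact operator argument based on Theorem \ref{thm2}.

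First I would show that the kernel decomposition (\ref{e7})--(\ref{e8}) satisfies $\bm{B}\in [C_{0,\pi}^n(\R^2)]^{2\times 2}$ and $\bm{C}\in [BC_p^n(\R^2)]^{2\times 2}$ for some $p>1$, whenever $f\in B_{c,M}^{(n)}$. The support condition on $\bm{B}$ is immediate from the cut-off $\chi(s-t)$. For the smoothness of $\bm{B}^*$ and $\bm{C}^*$ near the diagonal, I would exploit the calculations of Section \ref{sec3}: the asymptotic expansions (\ref{c7})--(\ref{d7}) together with the choice of $\widetilde{\mu},\widetilde{\lambda}$ in (\ref{xx1}) show that the logarithmic singularity has been completely separated out (cf.\ the vanishing of $B^{(2)}_{jk}(s,s)$ in (\ref{d8})) and the remaining diagonal-free pieces inherit $n$ derivatives from the $(n+2)$-regularity of $f$. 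Off-diagonal smoothness and the weighted decay $(1+|s-t|)^p$ needed for $\bm{C}$ follow from the far-field behaviour of $H_n^{(1)}(\kappa r)\sim r^{-1/2}$ combined with the oscillation/stationary-phase structure of $\bm{G}(x,y')$, together with the regularity of $\bm{U}(x,y)$ furnished by \cite[Theorem 2.1]{TA02}.

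Next I would invoke the quadrature results of \cite{MACK}: for the logarithmic part, the weights $R_j^{(N)}(s)$ yield
\[
\sup_{s\in\R}\left\|\frac{1}{2\pi}\int_\R\ln\!\left(4\sin^2\frac{s-t}{2}\right)\bm{B}(s,t)\bm{\phi}(t)dt
-\sum_{j\in\Z}R_j^{(N)}(s)\bm{B}(s,t_j)\bm{\phi}(t_j)\right\|\lesssim N^{-n}\|\bm{\phi}\|_{\infty,\R},
\]
with an analogous trapezoidal-rule estimate for the $\bm{C}$ part under the weighted norm controlled by $BC_p^n$. Denoting $\bm{K}:=\bm{D}-\rm{i}\eta\bm{S}$ and $\bm{K}_N\widetilde{\bm{\psi}}(s):=\sum_j\alpha_j^{(N)}(s)\widetilde{\bm{\psi}}(t_j)$, these bounds give both pointwise convergence $\bm{K}_N\bm{\phi}\to\bm{K}\bm{\phi}$ in $[BC(\R)]^2$ and, by an equicontinuity/decay argument, collective compactness of $\{\bm{K}_N-\bm{K}\}_{N\ge 1}$.

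Finally, Theorem \ref{thm2} gives the bounded invertibility of $\bm{I}+\bm{K}$ on $[BC(\R)]^2$. Combining with the previous step, Anselone's perturbation theorem produces $N_0$ such that $\bm{I}+\bm{K}_N$ is boundedly invertible for all $N>N_0$ with uniformly bounded inverses, which already yields unique solvability of (\ref{f6}). Writing
\[
\widetilde{\bm{\psi}}-\widetilde{\bm{\psi}}_N=(\bm{I}+\bm{K}_N)^{-1}(\bm{K}-\bm{K}_N)\widetilde{\bm{\psi}},
\]
the quadrature estimate applied to $\widetilde{\bm{\psi}}$ itself gives the desired $N^{-n}$ rate, provided $\widetilde{\bm{\psi}}\in[BC^n(\R)]^2$. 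This last regularity would be established by a bootstrap on (\ref{a16}): the smoothness of $\widetilde{\bm{g}}\in [BC^n(\R)]^2$ plus the kernel regularity just proved allow one to differentiate the integral equation up to order $n$ and estimate in $[BC(\R)]^2$.

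I expect the main obstacle to be verifying $\bm{C}\in [BC_p^n(\R^2)]^{2\times 2}$, specifically obtaining the weighted decay $(1+|s-t|)^p$ for all mixed derivatives up to order $n$. The singular-diagonal contribution is neutralised by the explicit separation in Section \ref{sec3} and the $\chi$-correction in (\ref{e8}), but the far-field decay requires a somewhat delicate analysis of the reflection terms $\bm{G}(x,y')-\bm{U}(x,y)$, since $\bm{U}$ is only guaranteed to be $C^1$ up to $\overline{U_h}$ a priori and additional smoothness/decay must be extracted from its Fourier representation.
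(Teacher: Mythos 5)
Your overall strategy coincides with the paper's: split the kernel as in (\ref{e7})--(\ref{e8}), verify $\bm{B}\in C_{0,\pi}^n(\R^2)$ and $\bm{C}\in BC_p^n(\R^2)$, and then run the standard Nystr\"{o}m machinery on the real line. The paper simply cites \cite[Theorems 2.1 and 3.13]{MACK} for the quadrature error, the collective-compactness/Anselone stability step, and the error bound in terms of $\|\widetilde{\bm{g}}\|_{[BC^n(\R)]^2}$, whereas you re-derive that machinery (including the regularity of $\widetilde{\bm{\psi}}$ by bootstrapping the integral equation); that part of your plan is sound and is essentially the content of the cited results. Your treatment of the near-diagonal smoothness (conditions on $\bm{B}^*$ and $\bm{C}^*$) also matches the paper, which carries it out via the ascending series expansions (\ref{f16})--(\ref{f21}) and the analyticity of $J_n(r)/r^n$, using the choice (\ref{xx1}).

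However, there is a genuine gap exactly where you flag the "main obstacle": the weighted decay $(1+|s-t|)^{-p}$ with $p>1$ for the kernel when $|s-t|\geq\pi$ (condition ${\bm C3}$). The mechanism you propose --- the far-field behaviour $H_n^{(1)}(\kappa r)\sim r^{-1/2}$ together with oscillation in $\bm{G}(x,y')$ --- cannot work termwise: each individual piece $\bm{G}(x,y)$, $\bm{G}(x,y')$, $\bm{U}(x,y)$ only decays like $|s-t|^{-1/2}$, which gives $p=1/2<1$ and is not summable in the sense required by \cite{MACK}. The decay with $p>1$ comes only from cancellation in the full half-plane Dirichlet Green's tensor: the paper invokes the estimate of \cite[Theorem 2.1]{TA02},
\begin{equation*}
\max_{j,k=1,2}\left|G_{D,h,jk}(x,y)\right|\lesssim \frac{1+(x_2-h)(y_2-h)}{|x_1-y_1|^{3/2}},
\qquad |x_1-y_1|\geq\varepsilon>0,
\end{equation*}
and then upgrades it to all partial derivatives of $\bm{G}_{D,h}$ (needed because the double-layer kernel $\bm{\Pi}^{(2)}_{D,h}$ contains first derivatives, and ${\bm C3}$ requires mixed derivatives up to order $n$) by interior elliptic regularity, citing \cite[Theorem 3.9]{GT83}; this yields ${\bm C3}$ with $p=3/2$. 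Your sketch neither identifies this combined-tensor estimate nor addresses the decay of derivative kernels, and your stated worry about $\bm{U}$ being only $C^1(\overline{U_h})$ is not the real issue --- the issue is that without the $|x_1-y_1|^{-3/2}$ bound for $\bm{G}_{D,h}$ and its derivatives, condition ${\bm C3}$ (equivalently $\bm{C}\in BC_p^n$ with $p>1$) is not established and the convergence theorem of \cite{MACK} cannot be applied. Supplying this estimate (or an equivalent analysis of the Fourier representation of $\bm{U}$) is the missing step.
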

\begin{proof}	
According to Theorem \ref{thm2}, the integral equation (\ref{a16}) has exactly one solution $\widetilde{\bm{\psi}}\in [BC(\R)]^2$ for every $\widetilde{\bm{g}}\in [BC(\R)]^2$ and there exists {\color{ljl}$C_0>0$ such that $\|(\bm{I}+\bm{D}-{\rm i}\eta \bm{S})^{-1}\|\leq C_0$}.
Then by \cite[Theorem 3.13]{MACK}, the statement of this theorem holds if $\bm{B}(s,t)\in C_{0,\pi}^n(\R^2)$ and $\bm{C}(s,t)\in BC_p^n(\R^2)$ for some $p>1$. With the help of  \cite[Theorem 2.1]{MACK}, it is equivalent to showing the following three conditions: for all $j,l\in\N$ with $j+l\leq n$, there exists constants $C>0$ and $p>1$ such that
\begin{eqnarray*}
&&{\bm C1}.\;\; \left|\frac{\partial^{j+l}\bm{B^*}(s,t)}{\partial s^j\partial t^l}\right|\leq C,\qquad\qquad\qquad\quad{s,t\in\R},\;\;|s-t|\leq \pi,\\
&&{\bm C2}.\;\; \left|\frac{\partial^{j+l}\bm{C^*}(s,t)}{\partial s^j\partial t^l}\right|\leq C,\qquad\qquad\qquad\quad{s,t\in\R},\;\;|s-t|\leq \pi,\\
&&{\bm C3}.\;\; \left|\frac{\partial^{j+l}\bm{A}(s,t)}{\partial s^j\partial t^l}\right|\leq C(1+|s-t|)^{-p},\qquad\;{s,t\in\R},\;\;|s-t|\geq \pi,
\end{eqnarray*}
where $\bm{B^*}(s,t)$ and $\bm{C^*}(s,t)$ are defined by (\ref{e5}) and (\ref{e6}), respectively. Thus, it suffices to show that ${\bm C1}$--${\bm C3}$ hold.

For the condition ${\bm C1}$, we recall that $\bm{B^*}(s,t)=-{\rm i}\eta \bm{B}^{(1)}(s,t)+\bm{B}^{(2)}(s,t)$, where $\bm{B}^{(1)}(s,t)$ and $\bm{B}^{(2)}(s,t)$ are defined by (\ref{c5}) and (\ref{d4}), respectively. By the asymptotic formulas (\ref{c8}) and the fact that $J_n(r)$ is analytic for all $r\in\R$, we obtain that $J_n(r)/r^n$ is also analytic for all $r\in\R$. Since $f\in B_{c,M}^{(n)}$, it follows from (\ref{c5}) and (\ref{d4}) that $\bm{B}^{(1)}(s,t)\in BC^n(\R^2)$ and $\bm{B}^{(2)}(s,t)\in BC^n(\R^2)$, which implies that ${\bm C1}$ holds.

For the condition ${\bm C2}$, we recall that $\bm{C^*}(s,t)=-{\rm i}\eta \bm{C}^{(1)}(s,t)+\bm{C}^{(2)}(s,t)-\bm{A}^{(3)}(s,t)$, where $\bm{C}^{(1)}(s,t)$, $\bm{C}^{(2)}(s,t)$, 
and $\bm{A}^{(3)}(s,t)$ are defined by (\ref{c6}), (\ref{d5}), and (\ref{x2}), respectively. 
To prove this condition, we first introduce the following notations
\begin{eqnarray*}\label{f9}
&&\rho_n(\kappa,s,t):=H_n^{(1)}(\kappa r)-\frac{2 {\rm i}}{\pi}J_n(\kappa r)
\ln |s-t|,\quad\gamma_1(s,t):=\frac{\kappa_{\rm s}\rho_1(\kappa_{\rm s}, s, t)
-\kappa_{\rm p}\rho_1(\kappa_{\rm p}, s, t)}{r},\\ \label{f11}
&&\gamma_2(s,t):=\kappa_{\rm s}^2\rho_2(\kappa_{\rm s},s,t)
-\kappa_{\rm p}^2\rho_2(\kappa_{\rm p}, s,t),\qquad\;\;
\gamma_3(s,t):=r\left[\kappa_{\rm s}^3\rho_3(\kappa_{\rm s}, s,t)
-\kappa_{\rm p}^3\rho_3(\kappa_{\rm p}, s,t)\right],\\ \label{f13}
&&\xi(s,t):=\frac{(s-t)f'(t)+f(t)-f(s)}{r^2},
\end{eqnarray*}
and for $j,k=1,2$,
\begin{eqnarray*}
&&\zeta_{jk}(s,t):=\frac{(x_j(s)-y_j(t))(x_k(s)-y_k(t))}{r^2},\\\nonumber
&&\sigma_{jk}(s,t):=-(\mu+\widetilde{\mu})\frac{\rm i}{2\omega^2}\frac{\gamma_2(s,t)}{r^2}\left[(x_j(s)-y_j(t))l_k(t)+(x_k(s)-y_k(t))l_j(t)\right]\\\nonumber
&&\qquad\qquad\quad+\widetilde{\lambda}\left[\frac{\rm i}{2\mu}\frac{\kappa_{\rm s}}{r}\rho_1(\kappa_{\rm s}, s, t)-\frac{2\rm i}{\omega^2}\frac{\gamma_2(s,t)}{r^2}+\frac{\rm i}{2\omega^2}\frac{\gamma_3(s,t)}{r^2}\right](x_j(s)-y_j(t))l_k(t)\\\label{f14}
&&\qquad\qquad\quad-\widetilde{\mu}\frac{\rm i}{2\mu}\frac{\kappa_{s}}{r}\rho_1(\kappa_{\rm s},s,t)\left[(f(s)-f(t))\delta_{j1}-(s-t)\delta_{j2}\right]l^{\perp}_k(t).
\end{eqnarray*}
Using these notations and (\ref{e6}), (\ref{c6}), and (\ref{d6}), we can rewrite the elements of $\bm{C^*}(s,t)$ as
\begin{eqnarray}\nonumber
C^*_{jk}(s,t)&=&-{\rm i}\eta\left\{\left[\frac{\rm i}{2\mu}\rho_0(\kappa_{\rm s}, s, t)-\frac{\rm i}{2\omega^2}\gamma_1(s,t)\right]\delta_{jk}+\frac{\rm i}{2\omega^2}\gamma_2(s,t)\zeta_{jk}(s,t)\right\}\sqrt{1+f'(t)^2}\\\nonumber
&&+(\mu+\widetilde{\mu})\left\{\left[-\frac{\rm i}{2\mu}\kappa_{\rm s}r\rho_1(\kappa_{\rm s}, s, t)+\frac{\rm i}{2\omega^2}\gamma_2(s,t)\right]\delta_{jk}-\frac{\rm i}{2\omega^2}\gamma_3(s,t)\zeta_{jk}(s,t)\right\}\xi(s,t)\\\label{f15}
&&+\sigma_{jk}(s,t)-A_{jk}^{(3)}(s,t), \quad\quad j,k=1,2.
\end{eqnarray}
With the help of $H_n^{(1)}(z)=J_n(z)+{\rm i}Y_n(z)$ and the ascending series expansions of the Bessel functions (see \cite[(3.97) and (3.98)]{CK19}), we obtain that
\begin{eqnarray}\label{f16}
&&\rho_0(\kappa, s,t)=\left(1+\frac{2\rm i}{\pi}C_E+\frac{2\rm i}{\pi}
\ln\frac{\kappa r}{2|s-t|}\right)J_0(\kappa r)-\frac{2\rm i}{\pi}
\sum_{p=0}^{+\infty}\frac{(-1)^p}{(p!)^2}\left(\frac{\kappa r}{2}\right)^{2p}\phi(p),\\ \nonumber
&&\kappa_{\rm s}r\rho_1(\kappa_{\rm s}, s, t)=\left(1+\frac{2\rm i}{\pi}C_E
+\frac{2\rm i}{\pi}\ln \frac{\kappa_{\rm s} r}{2|s-t|}\right)\kappa_{\rm s}rJ_1(\kappa_{\rm s} r)
-\frac{2{\rm i}}{\pi}\\ \label{f17}
&&\qquad\qquad\qquad-\frac{2\rm i}{\pi}\sum_{p=0}^{+\infty}\frac{(-1)^p}{p!(p+1)!}
\left(\frac{\kappa_{\rm s} r}{2}\right)^{2+2p}\left(\phi(p+1)+\phi(p)\right),\\ \nonumber
&&\gamma_1(s,t)=\kappa_{\rm s}\left(1+\frac{2\rm i}{\pi}C_E+\frac{2\rm i}{\pi}
\ln\frac{\kappa_{\rm s}r}{2|s-t|}\right)\frac{J_1(\kappa_{\rm s}r)}{r}
-\kappa_{\rm p}\left(1+\frac{2\rm i}{\pi}C_E+\frac{2\rm i}{\pi}
\ln \frac{\kappa_{\rm p}r}{2|s-t|}\right)\frac{J_1(\kappa_{\rm p}r)}{r}\\ \label{f18}
&&\qquad\qquad-\frac{\rm i}{2\pi}\sum_{p=0}^{+\infty}\frac{(-1)^p}{p!(p+1)!}
\left(\frac{r}{2}\right)^{2p}(\kappa_{\rm s}^{2+2p}-\kappa_{\rm p}^{2+2p})(\phi(p)+\phi(p+1)),\\ \nonumber
&&\gamma_2(s,t)=\kappa_{\rm s}^2\left(1+\frac{2\rm i}{\pi}C_E+\frac{2\rm i}{\pi}
\ln \frac{\kappa_{\rm s}r}{2|s-t|}\right)J_2(\kappa_{\rm s}r)-\kappa_{\rm p}^2
\left(1+\frac{2\rm i}{\pi}C_E+\frac{2\rm i}{\pi}
\ln \frac{\kappa_{\rm p}r}{2|s-t|}\right)J_2(\kappa_{\rm p}r)\\ \label{f19}
&&\qquad\qquad-\frac{\rm i}{\pi}(\kappa_{\rm s}^2-\kappa_{\rm p}^2)
-\frac{\rm i}{\pi}\sum_{p=0}^{+\infty}\frac{(-1)^p}{p!(p+2)!}
\left(\frac{r}{2}\right)^{2+2p}(\kappa_{\rm s}^{4+2p}
-\kappa_{\rm p}^{4+2p})(\phi(p)+\phi(p+2)),\\ \nonumber
&&\gamma_3(s,t)=\kappa_{\rm s}^3r\left(1+\frac{2\rm i}{\pi}C_E
+\frac{2\rm i}{\pi}\ln \frac{\kappa_{\rm s}r}{2|s-t|}\right)J_3(\kappa_{\rm s}r)
-\kappa_{\rm p}^3r\left(1+\frac{2\rm i}{\pi}C_E+\frac{2\rm i}{\pi}
\ln \frac{\kappa_{\rm p}r}{2|s-t|}\right)J_3(\kappa_{\rm p}r)\\ \nonumber
&&\qquad\qquad-\frac{2\rm i}{\pi}(\kappa_{\rm s}^2-\kappa_{\rm p}^2)
-\frac{{\rm i}r}{4\pi}(\kappa_{\rm s}^4-\kappa_{\rm p}^4)\\ \label{f20}
&&\qquad\qquad-\frac{{\rm i}r}{\pi}\sum_{p=0}^{+\infty}\frac{(-1)^p}{p!(p+3)!}
\left(\frac{r}{2}\right)^{3+2p}(\kappa_{\rm s}^{6+2p}-\kappa_{\rm p}^{6+2p})(\phi(p)+\phi(p+3)),
\end{eqnarray}
and for $j,k=1,2$,
\begin{eqnarray}\nonumber
\sigma_{jk}(s,t)&=&\frac{1}{\pi}\frac{\lambda+\mu}{\lambda+3\mu}\xi(s,t)\delta_{jk}
+\frac{\rm  i}{2\mu}\left\{\widetilde{\lambda}(x_j(s)-y_j(t))l_k(t)
-\widetilde{\mu}\left[(f(s)-f(t))\delta_{j1}-(s-t)\delta_{j2}\right]l^{\perp}_k(t)\right\}\\ \nonumber
&&\times\Bigg\{\kappa_{\rm s}\left(1+\frac{2\rm i}{\pi}C_E
+\frac{2\rm i}{\pi}\ln\frac{\kappa_{\rm s}r}{2|s-t|}\right)\frac{J_1(\kappa_{\rm s}r)}{r}
-\frac{{\rm i}\kappa_{\rm s}^2}{2\pi}\sum_{p=0}^{+\infty}\frac{(-1)^p}{p!(p+1)!}
\left(\frac{\kappa_{\rm s}r}{2}\right)^{2p}(\phi(p)+\phi(p+1))\Bigg\}\\ \nonumber
&&-\frac{\rm i}{2\omega^2 }\left\{(\mu+\widetilde{\mu})\left[(x_j(s)-y_j(t))l_k(t)
+(x_k(s)-y_k(t))l_j(t)\right]+4\widetilde{\lambda}(x_j(s)-y_j(t))l_k(t)\right\}\\ \nonumber
&&\times\Bigg\{\kappa_{\rm s}^2\left(1+\frac{2\rm i}{\pi}C_E
+\frac{2\rm i}{\pi}\ln\frac{\kappa_{\rm s}r}{2|s-t|}\right)\frac{J_2(\kappa_{\rm s}r)}{r^2}
-\kappa_{\rm p}^2\left(1+\frac{2\rm i}{\pi}C_E+\frac{2\rm i}{\pi}
\ln\frac{\kappa_{\rm p}r}{2|s-t|}\right)\frac{J_2(\kappa_{\rm p}r)}{r^2}\\ \nonumber
&&-\frac{\rm i}{4\pi}\sum_{p=0}^{+\infty}\frac{(-1)^p}{p!(p+2)!}
\left(\frac{r}{2}\right)^{2p}(\kappa_{\rm s}^{4+2p}-\kappa_{\rm p}^{4+2p})(\phi(p)+\phi(p+2))\Bigg\}
+\frac{{\rm i}\widetilde{\lambda}}{2\omega^2}\frac{(x_j(s)-y_j(t))l_k(t)}{r}\\ \nonumber
&&\times\Bigg\{\kappa_{\rm s}^3\left(1+\frac{2\rm i}{\pi}C_E+\frac{2\rm i}{\pi}
\ln\frac{\kappa_{\rm s}r}{2|s-t|}\right)J_3(\kappa_{\rm s}r)-\kappa_{\rm p}^3\left(1+\frac{2\rm i}{\pi}C_E
+\frac{2\rm i}{\pi}\ln \frac{\kappa_{\rm p}r}{2|s-t|}\right)J_3(\kappa_{\rm p}r)\\ \label{f21}
&&-\frac{\rm i}{4\pi}(\kappa_{\rm s}^4-\kappa_{\rm p}^4)-\frac{\rm i}{\pi}\sum_{p=0}^{+\infty}\frac{(-1)^p}{p!(p+3)!}\left(\frac{r}{2}\right)^{3+2p}(\kappa_{\rm s}^{6+2p}
-\kappa_{\rm p}^{6+2p})(\phi(p)+\phi(p+3))\Bigg\},
\end{eqnarray}
where $\phi(0):=0$ and $\phi(p):=\sum_{m=1}^p\frac{1}{m}$ for $p=1,2,3,...$. By a straightforward 
calculation and \cite[Section 7.1.3]{KA}, we have $\frac{r}{|s-t|}\in BC^n(\R^2)$, 
$\sqrt{1+f'(t)^2}\in BC^n(\R)$, $\zeta_{jk}(s,t)\in BC^n(\R^2)$ and $\xi(s,t)\in BC^n(\R^2)$. 
Thus, using (\ref{f16})--(\ref{f20}), we conclude that $\rho_0(\kappa, s, t)\in BC^n(\R^2)$, 
$\kappa_{\rm s}r\rho_1(\kappa_{\rm s}, s, t)\in BC^n(\R^2)$, $\gamma_1(s,t)\in BC^n(\R^2)$, 
$\gamma_2(s,t)\in BC^n(\R^2)$, and $\gamma_3(s,t)\in BC^n(\R^2)$. By a careful observation 
from (\ref{f21}), we have $\sigma_{jk}(s,t)\in BC^n(\R^2)$.
Since $\bm{A}^{(3)}(s,t)=\bm{A}^{(4)}(s,t)-\bm{A}^{(5)}(s,t)$, it follows from the smoothness 
of $\bm{G}(x,y')$ and $\bm{U}(x,y)$ for $x,y\in U_h$ that $\bm{A}^{(3)}(s,t)$ is smooth. These, 
together with (\ref{f15}), imply that $C^*_{jk}(s,t)\in BC^{n}(\R^2)$. Thus, the condition ${\bm C2}$ holds.

For the condition ${\bm C3}$, it can be seen in \cite[Theorem 2.1]{TA02} that the elements of 
the Green's tensor $\bm{G}_{D,h}$ satisfies
\begin{eqnarray}\nonumber
 \max_{j,k=1,2}\left|G_{D,h,jk}(x,y)\right|\lesssim \frac{1+(x_2-h)(y_2-h)}{|x_1-y_1|^{3/2}}
\end{eqnarray}
for $x,y\in U_h$ and $|x_1-y_1|\geq \varepsilon>0$.
This, together with the regularity estimates for solutions to elliptic partial differential equations 
(see \cite[Theorem 3.9]{GT83}), implies that such estimates actually hold for partial derivatives of $\bm{G}_{D,h}(x,y)$ of any order. Note that $\bm{A}(s,t)$ is related to $\bm{G}_{D,h}(x,y)$ and its 
derivatives, thus the condition ${\bm C3}$ holds with $p=3/2$ and $C>0$ only depends on $M$.
The proof is thus complete.
\end{proof}
\begin{remark}\label{thm42}
	From the proof of Theorem \ref{thm41}, it easily follows that $\bm{B}(s,t)\in C_{0,\pi}^n(\R^2)$ and $\bm{C}(s,t)\in BC_p^n(\R^2)$ with $p=3/2$ if $f\in B_{c,M}^{(n)}$.
\end{remark}

\section{Numerical results}\label{sec5}\setcounter{equation}{0}

The purpose of this section is to illustrate the feasibility of the Nystr\"{o}m method by several 
numerical examples. As presented in (\ref{a6}), the expression of $\bm{G}_{D,h}(x,y)$ involves 
the matrix function $\bm{U}(x,y)$ which is smooth on the boundary $\Gamma$. However,  
$\bm{U}(x,y)$ is given in terms of an improper integral on the infinite interval which is difficult 
to compute numerically. Thus in the numerical experiments,
we replace $\bm{G}_{D,h}(x,y)$ arising in (\ref{a10}) by $\bm{G}(x,y)-\bm{G}(x,y')$ to
avoid the complicated computation of $\bm{U}(x,y)$. It is shown that the numerical experiments 
are indeed satisfactory by using this replacement.

In the following examples, we assume that the Lam\'{e} constants $\lambda=1$, $\mu=1$ and 
the frequency $\omega=20$. For the Nystr\"{o}m method of the integral equation (\ref{f2}), 
we choose $cut = 10\pi$. Setting $s=t_j$ for $j=0,1,2,...,2cut/h$ in (\ref{f6}) gives a linear 
system of equations which can be solved to obtain the density $\widetilde{\bm{\psi}}_N$, and 
then we  can calculate the solution
$\bm{u}^{\rm sc}$ through (\ref{a9}). In each example, we compute  the scattered field at  random points $z_i, i=1,...,Nb$ in the region $[-2.5,2.5]\times[0.5,1.5]$, where the number of random points $Nb=101$. See the blue points in Figure \ref{profile} for the geometry profile. The elastic scattered field is a vector that can be written as $\bm{u}^{\rm sc}=(u^{\rm sc}_1,u^{\rm sc}_2)$, we will compute the following error for this scattered field
\be\label{error}
E(v):=\frac{1}{Nb}{\sum_{i=1}^{Nb}|v(z_i)-v^{app}(z_i)|^2}.
\en
Here $v$ is chosen to be Re $u^{\rm sc}_i$, Im $u^{\rm sc}_i$ or $\left|u^{\rm sc}_i\right|$ for $i=1,2$ in our numerical implementation, $v^{app}$ is the corresponding value computed by our Nystr\"{o}m method.

\begin{figure}[htbp]
	\centering
	\subfigure[flat plane]
	{\includegraphics[width=0.28\linewidth]{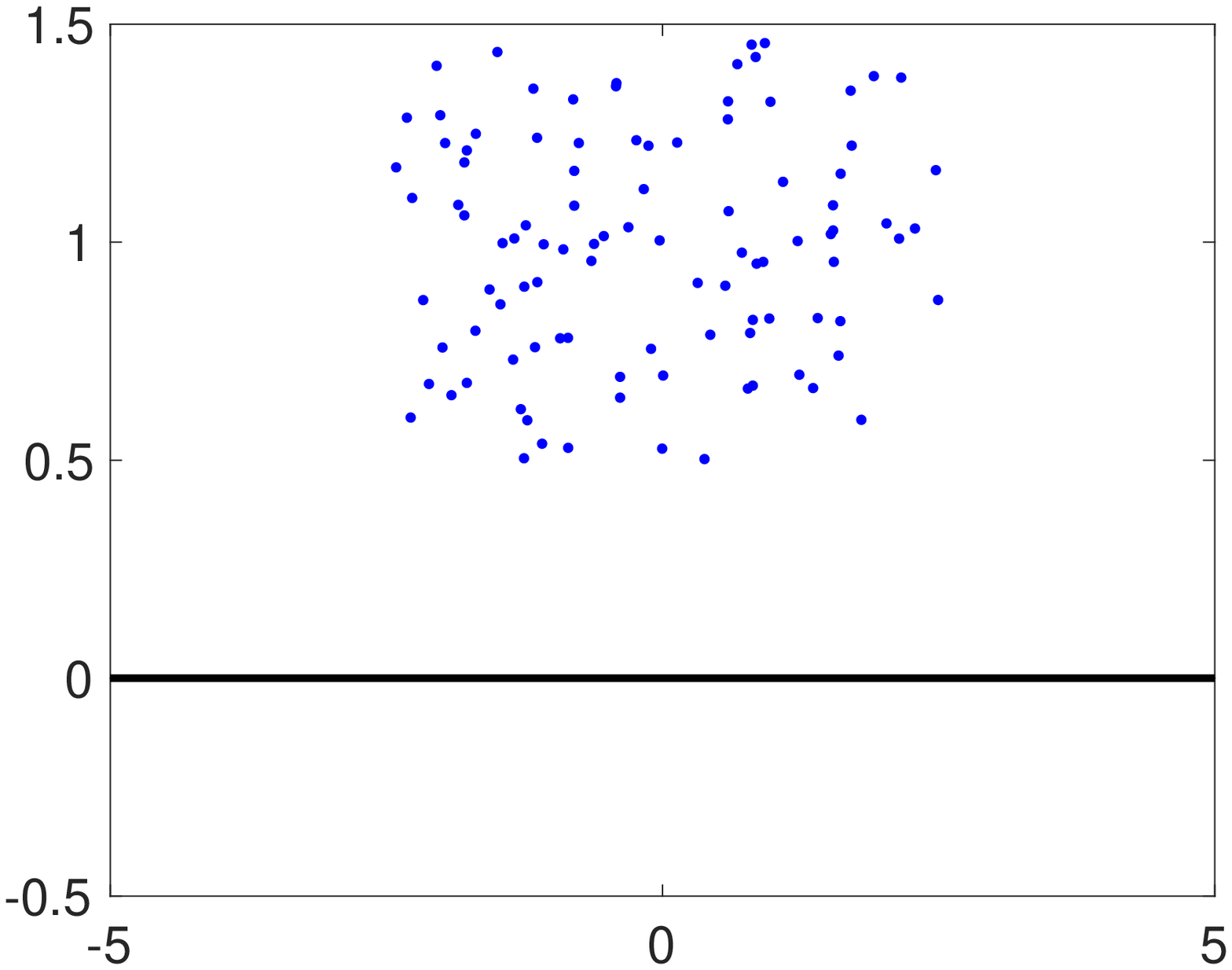}}
	\subfigure[periodic surface]
	{\includegraphics[width=0.28\linewidth]{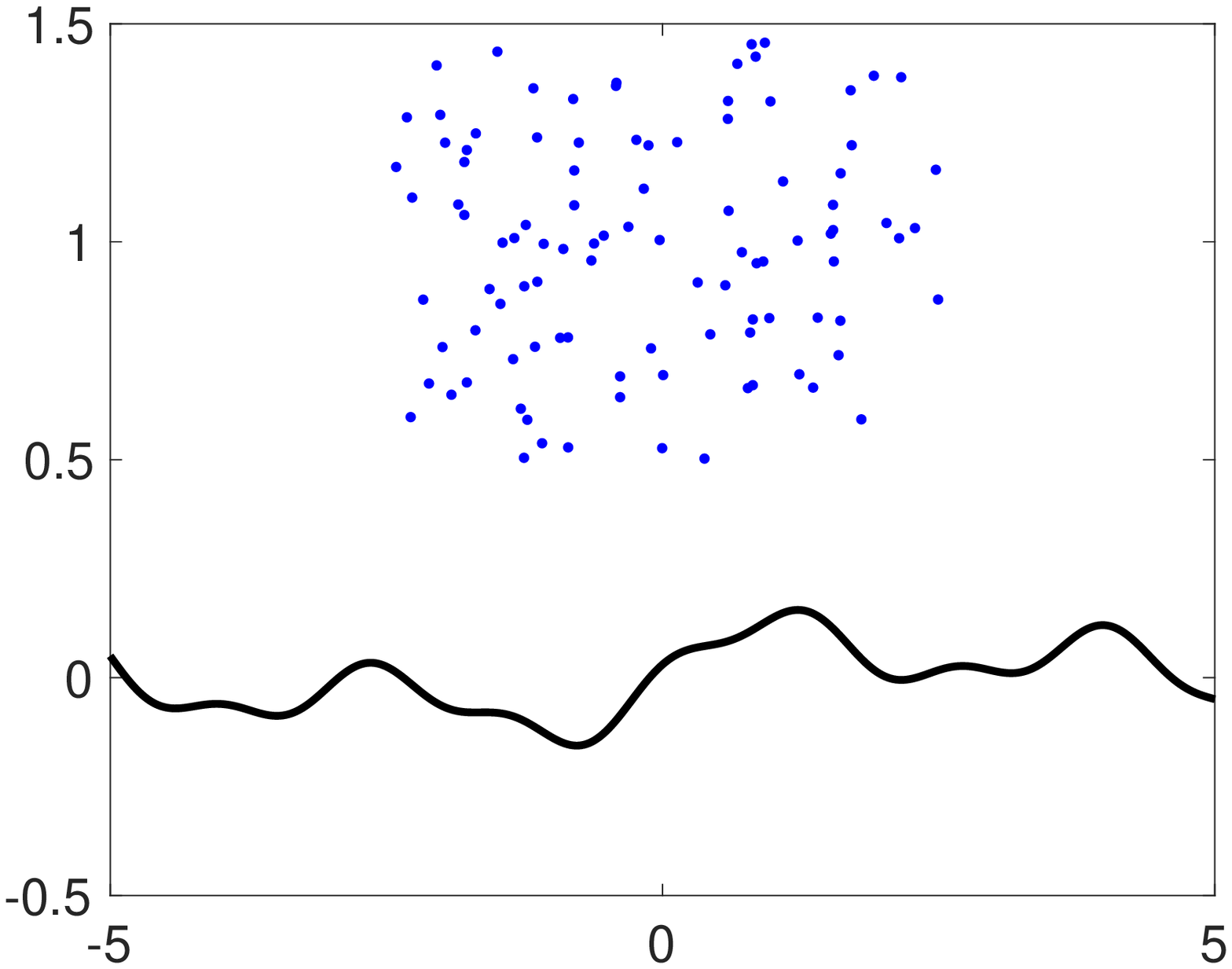}}
	\subfigure[rough surface]
	{\includegraphics[width=0.28\linewidth]{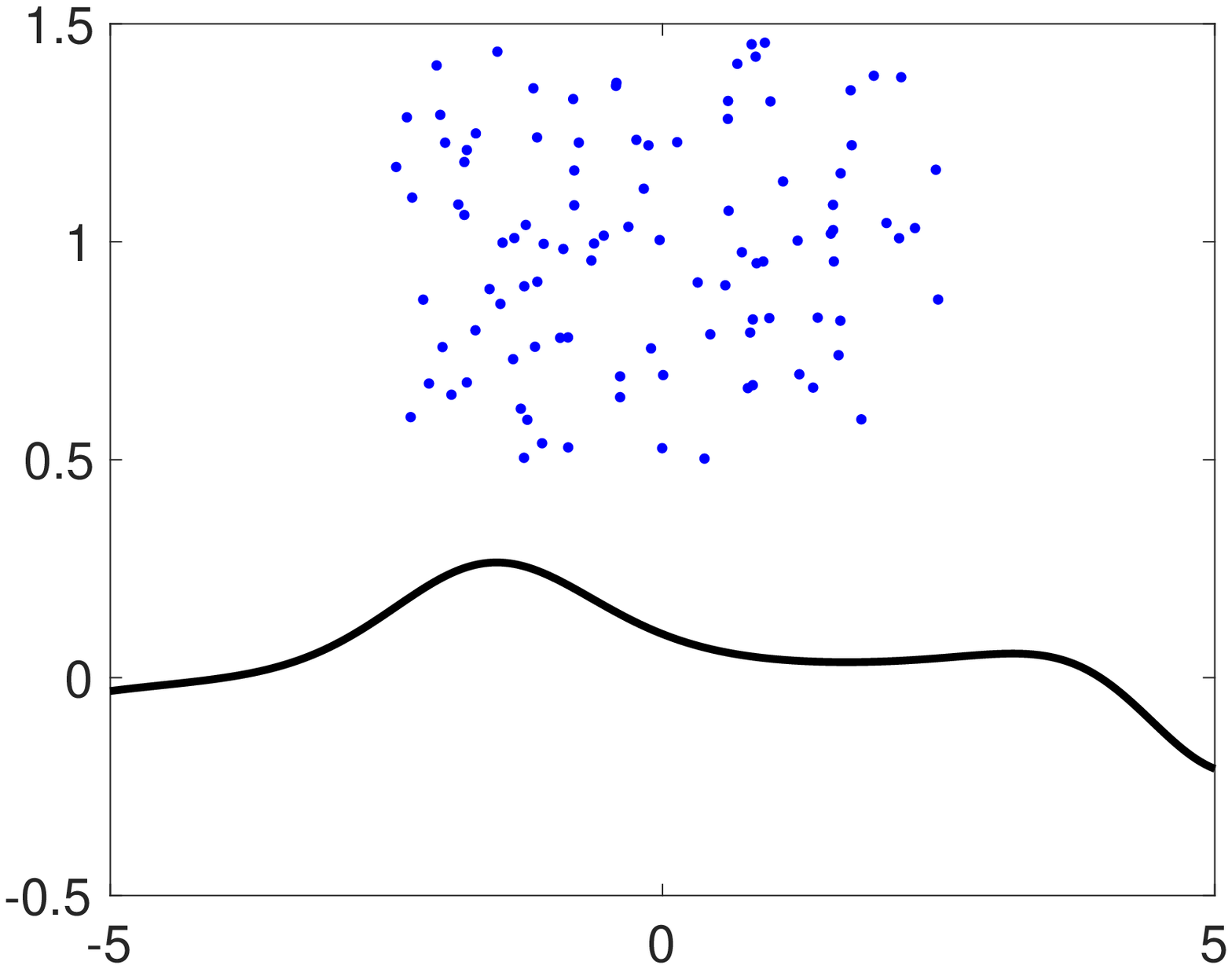}}
\caption{The solid lines in (a)--(c) represent the profile of the scattering interface for Examples 1--3, respectively, and the scattered field is computed on the blue random points in  the region $[-2.5,2.5]\times[0.5,1.5]$.} \label{profile}
\end{figure}

{\bf Example 1}. We consider the elastic scattering by a planar $x_2=0$ with an incident plane wave. 
The profile of the flat surface is given in Figure \ref{profile}(a). In general, an elastic plane 
wave can be written as a linear combination of a compressional plane wave 
$\bm{u}_{\rm p}^{\rm inc}(x;\theta)$ and a shear plane wave $\bm{u}_{\rm s}^{\rm inc}(x;\theta)$, that is,
\begin{eqnarray}\label{g1}
\bm{u}^{\rm inc}(x;\theta)=\alpha \bm{u}_{\rm p}^{\rm inc}(x;\theta)+\beta\bm{u}_{\rm s}^{\rm inc}(x;\theta),
\qquad \alpha, \beta\in\mathbb C,
\end{eqnarray}
where $\bm{u}_{\rm p}^{\rm inc}(x;\theta)= \theta e^{{\rm i}\kappa_{\rm p}x\cdot \theta}$ and  
$\bm{u}_{\rm s}^{\rm inc}(x;\theta)= \theta^{\perp} e^{{\rm i}\kappa_{\rm s}x\cdot \theta}$ 
with {$\theta\in{\mathbb S}:=\{x\in\R^2:|x|=1\}$} being an incident direction. In this example, 
we choose $\theta = (0,-1)^{\top}$. We consider the cases $(\alpha, \beta) = (1,0)$ 
and $(\alpha, \beta) = (0,1)$ in \eqref{g1}, then the corresponding incident waves are
\begin{eqnarray}\nonumber
\bm{u}^{\rm inc}_{\rm 1}(x) = \bm{u}^{\rm inc}_{\rm p}(x)=(0, -e^{-{\rm i}\kappa_{\rm p}x_2})^{\top}\quad\text{and}\quad
\bm{u}^{\rm inc}_{\rm 2}(x)=\bm{u}^{\rm inc}_{\rm s}(x)=(-e^{-{\rm i}\kappa_{\rm s}x_2}, 0)^{\top},
\end{eqnarray}
respectively. Since the rough surface is given by a planar $x_2=0$, it is easily seen that the 
corresponding scattered fields can be written explicitly as
\begin{eqnarray}\label{g3}
\bm{u}_{\rm 1}^{\rm sc}(x)=(u_{\rm 1, 1}^{\rm sc}, u_{\rm 1, 2}^{\rm sc})^{\top}=(0, e^{{\rm i}\kappa_{\rm p}x_2})^{\top}\quad\text{and}\quad
\bm{u}_{\rm 2}^{\rm sc}(x)=(u_{\rm 2, 1}^{\rm sc}, u_{\rm 2, 2}^{\rm sc})^{\top}=(e^{{\rm i}\kappa_{\rm s}x_2}, 0)^{\top},
\end{eqnarray}
respectively. Table \ref{table1} and Table \ref{table2} present the errors between the numerical results of the scattered fields and the exact solution computed by  \eqref{error} for the cases $(\alpha, \beta) = (1,0)$ and $(\alpha, \beta) = (0,1)$, respectively. 
In each table, we give the errors that calculated by our Nystr\"{o}m method with $N=8,16,32,64, 128$, 
respectively. It can be seen from these two tables that the error between the numerical solution and the exact solution  converges to $0$ as $N$ increases.

\begin{table}[htbp]
\footnotesize
\centering
\caption{Error against $N$ for the incident wave $\bm{u}^{\rm inc}_{\rm 1}(x)$ 
in Example 1 with a planar surface}
\begin{tabular}{c rrrrrr}
\hline
\multicolumn{1}{c }{N}
		& \multicolumn{1}{c}{E(Re $u^{\rm sc}_{\rm 1,1}$)}
		& \multicolumn{1}{c}{E(Im $u^{\rm sc}_{\rm 1,1}$)}
		& \multicolumn{1}{c}{E($|u^{\rm sc}_{\rm 1,1}|$)}
		& \multicolumn{1}{c}{E(Re $u^{\rm sc}_{\rm 1,2}$)}
		& \multicolumn{1}{c}{E(Im $u^{\rm sc}_{\rm 1,2}$)}
		& \multicolumn{1}{c}{E($|u^{\rm sc}_{\rm 1,2}|$)}\\ 
\hline
8 & 0.1706786080&	0.1604348338&	0.3311134418&	0.8032082301&	0.6041408954&	1.4073491255\\
16& 0.0001175089&	0.0000843658&	0.0002018747&	0.0113121237&	0.0090742350&	0.0203863586\\
32&	0.0002007818&	0.0001673652&	0.0003681471&	0.0003264799&	0.0000417871&	0.0003682670\\
64&	0.0001618967&	0.0001411445&	0.0003030412&	0.0001354001&	0.0000366766&	0.0001720767\\
128& 0.0001445112&	0.0001313396&	0.0002758508&	0.0000699852&	0.0000337565&	0.0001037417\\ 
\hline
\end{tabular}
\label{table1}
\end{table}	

\begin{table}[htbp]
\footnotesize
\centering
\caption{Error against $N$ for the incident wave $\bm{u}^{\rm inc}_{\rm 2}(x)$ 
in Example 1 with a planar surface}
\begin{tabular}{c rrrrrr}
\hline
\multicolumn{1}{c}{N}
		& \multicolumn{1}{c}{E({Re} $u^{\rm sc}_{\rm 2,1}$)}
		& \multicolumn{1}{c}{E(Im $u^{\rm sc}_{\rm 2,1}$)}
		& \multicolumn{1}{c}{E($|u^{\rm sc}_{\rm 2,1}|$)}
		& \multicolumn{1}{c}{E(Re $u^{\rm sc}_{\rm 2,2}$)}
		& \multicolumn{1}{c}{E(Im $u^{\rm sc}_{\rm 2,2}$)}
		& \multicolumn{1}{c}{E($|u^{\rm sc}_{\rm 2,2}|$)}\\ 
\hline
8&	0.2047793681&	0.2113485590&	0.4161279271&	0.3438342461&	0.3232904684&	0.6671247145\\
16&	0.0000753224&	0.0001141582&	0.0001894806&	0.0000774519&	0.0000768715&	0.0001543233\\
32&	0.0000597451&	0.0000798512&	0.0001395963&	0.0000216754&	0.0000316223&	0.0000532977\\
64&	0.0000557738&	0.0000576602&	0.0001134340&	0.0000130030&	0.0000180143&	0.0000310173\\
128& 0.0000526921&	0.0000485121&	0.0001012041&	0.0000134101&	0.0000154517&	0.0000288618\\ 
\hline
\end{tabular}
\label{table2}
\end{table}	

{\bf Example 2}. We consider the elastic scattering by a periodic unbounded rough surface with
the periodic surface given by
\begin{eqnarray}\nonumber
f(x_1)=0.084\sin(0.6\pi x_1)+0.084\sin(0.24\pi x_1)+0.03\sin(1.5\pi (x_1-1)).
\end{eqnarray}
See the profile of this periodic surface in Figure \ref{profile}(b). The incident wave is chosen to be
\be\label{lxl1}
{\bm u}^{\rm inc}_{\rm 3}(x) = \bm{G}(x,z){\bm q}
\en
with the point $z=(0,-3)$ and the polarization direction ${\bm q}=(0.6,0.8)^{\top}$. Due to the fact that the point $z$ is below the surface $\Gamma$, it follows from the well-posedness of the problem ({\bf DP}) that the corresponding scattered field has the explicit expression
\begin{eqnarray}\label{g5}
\bm{u}^{\rm sc}_{\rm 3}(x) =(u_{\rm 3, 1}^{\rm sc}, u_{\rm 3, 2}^{\rm sc})^{\top}= -\bm{G}(x,z){\bm q},\qquad x\in\Omega.
\end{eqnarray}
Table \ref{table3} gives the errors between the numerical results of the scattered field calculated by our Nystr\"{o}m method with $N=8,16,32,64,128$ and the exact solution, respectively. It can be seen from Table \ref{table3} that our Nystr\"{o}m method provides a satisfactory numerical results for this case.

\begin{table}[htbp]
\footnotesize
\centering
\caption{Error against $N$ for the incident wave
		${\bm u}^{\rm inc}_{\rm 3}(x)$ in Example 2 with a periodic surface}
\begin{tabular}{c rrrrrr}
\hline
\multicolumn{1}{c}{N}
		& \multicolumn{1}{c}{E(Re $u^{\rm sc}_{3,1}$)}
		& \multicolumn{1}{c}{E(Im $u^{\rm sc}_{3,1}$)}
		& \multicolumn{1}{c}{E($|u^{\rm sc}_{3,1}|$)}
		& \multicolumn{1}{c}{E(Re $u^{\rm sc}_{3,2}$)}
		& \multicolumn{1}{c}{E(Im $u^{\rm sc}_{3,2}$)}
		& \multicolumn{1}{c}{E($|u^{\rm sc}_{3,2}|$)}\\ 
\hline
8&	0.0000729326&	0.0000683190&	0.0001412516&	0.0000460758&	0.0000434141&	0.0000894899\\
16&	0.0000186844&	0.0000208419&	0.0000395263&	0.0000323776&	0.0000350254&	0.0000674030\\
32&	0.0000000333&	0.0000000331&	0.0000000664&	0.0000000159&	0.0000000179&	0.0000000339\\	
64&	0.0000000344&	0.0000000334&	0.0000000678&	0.0000000160&	0.0000000173&	0.0000000333\\							
128& 0.0000000346&	0.0000000335&	0.0000000681&	0.0000000161&	0.0000000173&	0.0000000333\\ 
\hline
\end{tabular}
\label{table3}
\end{table}	

{\bf Example 3}. We consider the elastic scattering by a non-periodic unbounded rough surface given by 
\begin{eqnarray}\nonumber
f(x_1)=0.1\cos(0.1x_1^2)e^{-\sin(x_1)}.
\end{eqnarray}
See the profile of this rough surface in Figure \ref{profile}(c). We choose the same incident wave ${\bm u}^{\rm inc}_{\rm 3}(x)$ as in \eqref{lxl1}. Similar as in Example 2, the corresponding scattered field has the form \eqref{g5}. Table \ref{table4} presents the error between our numerical results and the exact solution.

\begin{table}[htbp]
\footnotesize
\centering
\caption{Error against $N$ for the incident fields
		${\bm u}^{\rm inc}_{\rm 3}(x)$ in Example 3 with a non-periodic surface}
\begin{tabular}{c rrrrrr}
\hline
\multicolumn{1}{c}{N}
		& \multicolumn{1}{c}{E(Re $u^{\rm sc}_{3,1}$)}
		& \multicolumn{1}{c}{E(Im $u^{\rm sc}_{3,1}$)}
		& \multicolumn{1}{c}{E($|u^{\rm sc}_{3,1}|$)}
		& \multicolumn{1}{c}{E(Re $u^{\rm sc}_{3,2}$)}
		& \multicolumn{1}{c}{E(Im $u^{\rm sc}_{3,2}$)}
		& \multicolumn{1}{c}{E($|u^{\rm sc}_{3,2}|$)}\\ 
\hline
8& 0.0000448509&	0.0000367475&	0.0000815984& 0.0000443602&	0.0000440444&	0.0000884046\\																				16&	0.0000098770&	0.0000073343&	0.0000172113&	0.0000107694&	0.0000074233&	0.0000181927\\							
32&	0.0000001459&	0.0000001046&	0.0000002505&	0.0000000564&	0.0000000511&	0.0000001075\\									
64&	0.0000001320&	0.0000000788&	0.0000002108& 0.0000000510&	0.0000000474&	0.0000000985\\
128& 0.0000001320&	0.0000000782&	0.0000002102& 0.0000000506&	0.0000000479&	0.0000000985\\							 \hline
\end{tabular}
\label{table4}
\end{table}	
%

\section{Conclusion}\label{sec6}

In this paper, we present a Nystr\"{o}m method for the two-dimensional time-harmonic elastic scattering by unbounded rough surfaces with Dirichlet boundary condition. With the aid of the ascending series expansions of the Bessel functions, we analyze the singularities of the relevant integral kernels. Based on this, we obtain the superalgebraic convergence rate of the Nystr\"{o}m method depending on the smoothness of the rough surfaces.
Several numerical examples demonstrate that the numerical solution converges when the number of quadrature points $N$ on the rough surface increases.
A possible continuation is to extend the present work to the case of impedance boundary condition
or the case of penetrable interface, which will be our future work.

\section*{Acknowledgements}

This work was partially supported by the National Key R \& D Program of China (2018YFA0702502),
the NNSF of China grant 11871466 and Youth Innovation Promotion Association CAS.

\end{document}